\setlist{itemsep=0ex}
\tikzset{baseline=(current bounding box.west)}
\tikzset{every matrix/.style={
execute at begin cell=\node\bgroup$,
execute at end cell=$\egroup;%
,minimum size=4mm,
matrix anchor=south west,
inner sep=0pt
}}
\definecolor{Dark}{rgb}{0,0.4,0.2}  
\newcommand{\Dark}[1]{\textcolor{Dark}{#1}}
\newtheorem{theorem}{Theorem}
\newtheorem{lemma}[theorem]{Lemma}
\newtheorem{corollary}[theorem]{Corollary}
\newtheorem{conjecture}[theorem]{Conjecture}
\theoremstyle{definition}
\newtheorem{example}[theorem]{Example}
\def\printtitle{
    {\color{Dark} \centering \huge \sc \textbf{\@title}\par}}		
\title{Conditions To Extend Partial Latin Rectangles}
\def\printauthor{
    {\centering \small \@author}}				
\author{%
	Serge Ballif \\
	ballif@math.psu.edu \\
	\vspace{20pt}
	}
\begin{document} 
\printtitle 

\printauthor

\begin{abstract}
In 1974 Allan Cruse provided necessary and sufficient conditions to extend an $r\times s$ partial latin rectangle consisting of $t$ distinct symbols to a latin square of order $n$. Here we provide some generalizations and consequences of this result. Our results are obtained via an alternative proof
of Cruse's theorem.
\end{abstract}

\section{Introduction}
The question of whether a partial latin square can be completed to a latin square of the same order is known is known to be NP-complete \cite{Col84}. Yet, there are conditions that can guarantee when a given partial latin square is completable to a latin square. The most well-known result is the Evans Conjecture, proved in \cite{Sme81} and \cite{AH83}, that any partial latin square of order $n$ with at most $n-1$ entries can be completed to a latin square. The following two theorems are also well known, and they apply to problems more often encountered in practice. 

\begin{theorem}[Ryser's Theorem \cite{Rys51}]
An $r\times s$ latin rectangle, $R$, consisting of the symbols $1,2,\ldots,n$ can be extended to a latin square of order $n$ if and only if each of the $n$ symbols occurs at least $r+s-n$ times inside $R$.
\end{theorem}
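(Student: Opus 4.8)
The plan is to prove the two implications separately; the forward (necessity) direction is a one-paragraph counting argument, and the reverse (sufficiency) direction carries the real content.

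\textbf{Necessity.} Suppose $R$ occupies the top-left $r\times s$ corner of a Latin square $L$ of order $n$, and fix a symbol $k$. In $L$ the symbol $k$ occurs exactly $n$ times, at most once in each row and at most once in each column. An occurrence of $k$ outside $R$ lies either in one of the bottom $n-r$ rows (at most $n-r$ such occurrences, one per row) or in the top $r$ rows but one of the rightmost $n-s$ columns (at most $n-s$ such occurrences, one per column). Hence at most $(n-r)+(n-s)$ occurrences of $k$ lie outside $R$, so at least $n-(n-r)-(n-s)=r+s-n$ of them lie inside $R$.

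\textbf{Sufficiency.} I would split this into two classical steps: (i) extend $R$ to an $r\times n$ Latin rectangle $R^{*}$ by adjoining $n-s$ columns, and (ii) extend an arbitrary $r\times n$ Latin rectangle to a Latin square of order $n$ by adjoining $n-r$ rows. Step (ii) is the standard theorem of M. Hall: for an $r'\times n$ Latin rectangle with $r'<n$, the sets of symbols missing from the columns form an $(n-r')$-regular bipartite incidence structure (each column misses $n-r'$ symbols, each symbol is missing from exactly $n-r'$ columns), which therefore admits a perfect matching by Hall's marriage theorem; a system of distinct representatives is a legal new row, and one repeats until the square is full. I would simply cite this rather than reprove it. The crux is Step (i). For each row $x$ let $S_x\subseteq\{1,\dots,n\}$ be the set of symbols not appearing in row $x$ of $R$, so $|S_x|=n-s$, and let $N_k$ denote the number of occurrences of symbol $k$ in $R$; then $k\in S_x$ for exactly $r-N_k$ of the rows $x$. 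Build the bipartite graph $H$ on the $r$ rows and the $n$ symbols with an edge $xk$ whenever $k\in S_x$. Every row-vertex has degree exactly $n-s$, and every symbol-vertex has degree $r-N_k\le n-s$, the last inequality being \emph{precisely} the hypothesis $N_k\ge r+s-n$. So $H$ has maximum degree $n-s$, and by König's edge-colouring theorem (the chromatic index of a bipartite graph equals its maximum degree, itself a consequence of Hall's theorem) its edges can be properly coloured with colours $1,\dots,n-s$. Reading ``colour $c$ on edge $xk$'' as ``put symbol $k$ into row $x$ of the $c$-th new column'' fills the $r\times(n-s)$ block of new columns: the $n-s$ edges at a fixed row get distinct colours, so each row receives each of its missing symbols exactly once and in distinct new columns; and two edges sharing a symbol-vertex get distinct colours, so no symbol repeats within a new column. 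The augmented array $R^{*}$ is then an $r\times n$ Latin rectangle, and Step (ii) completes it to a Latin square of order $n$.

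\textbf{Main obstacle.} The delicate point is entirely in Step (i), and it is the degree count: it is the hypothesis $N_k\ge r+s-n$, and nothing weaker, that makes $n-s$ colours suffice, so the bound is sharp. If one instead tries to adjoin columns one at a time via Hall's theorem, the difficulty resurfaces as the need to choose each new column so that it uses every ``critical'' symbol --- one occurring exactly $r+s-n$ times --- in order to preserve the count hypothesis for the next step; that requires a matching simultaneously saturating the rows and the critical symbols, which is exactly the extra strength that the edge-colouring viewpoint packages cleanly.
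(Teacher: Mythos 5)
Your proof is correct, but it reaches the sufficiency half by a different route than the paper does. The paper never proves Ryser's Theorem in isolation: it obtains it as the special case $t=n$ of Cruse's Theorem, whose sufficiency proof runs through the Shuffle Lemma (adjoin $n-s$ columns, fill them with the symbols missing from each row, then repeatedly extract systems of distinct representatives via Hall's Marriage Theorem, with an augmenting-chain argument to force every ``necessary'' symbol --- one occurring the maximum number of times --- into each extracted column, plus a column-balancing step) followed by the Gap Filling Lemma and M. Hall's completion theorem. You instead handle the column-adjoining step in one shot: form the bipartite row-versus-missing-symbol graph, observe that the hypothesis $N_k\ge r+s-n$ is exactly what caps the symbol degrees at $n-s$, and invoke K\"onig's edge-colouring theorem so that the $n-s$ colour classes become the $n-s$ new columns. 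This is essentially the same tool (in matrix form, K\"onig's decomposition of a nonnegative integral matrix) that the paper uses for its Gap Filling Lemma, but applied to the missing-symbol incidence structure rather than to the empty cells; your ``main obstacle'' remark correctly identifies that the edge-colouring viewpoint absorbs exactly the difficulty that the paper's Shuffle Lemma resolves by hand with its ``necessary symbol'' argument. The trade-off is that your packaging is cleaner for Ryser's Theorem itself, where the adjoined block is completely full, while the paper's Shuffle Lemma is built to survive the general Cruse setting $t<n$, where the block is only partially filled and one must additionally guarantee a minimum number of entries per new column --- a constraint your one-shot colouring does not address and which is why the paper carries the heavier lemma. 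Your necessity argument coincides with the counting remark the paper gives for condition (A3).
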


\begin{theorem}[Evans's Theorem \cite{Eva60}]
A partial latin square of order $r$ can be extended to (embedded inside) a latin square of order $n$ for each $n\ge 2r$.
\end{theorem}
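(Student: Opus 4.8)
The approach I would take reduces everything to Ryser's Theorem by exploiting the slack in the hypothesis $n \ge 2r$. Let $P$ denote the given partial Latin square of order $r$, viewed as an $r \times r$ array whose filled cells carry symbols from $\{1,\ldots,r\}$ with no repeats in any row or column, and take $\{1,\ldots,n\}$ as the symbol set of the target square. The plan is: (1) complete $P$ to a fully filled $r \times r$ Latin rectangle $Q$ on the larger alphabet $\{1,\ldots,n\}$; (2) apply Ryser's Theorem to extend $Q$ to a Latin square $L$ of order $n$; and (3) observe that $L$ contains $P$ in its upper-left $r \times r$ corner.

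For step (1) I would fill the empty cells of $P$ greedily, one at a time in any order: when a cell $(i,j)$ comes up, the forbidden symbols are those already present in row $i$ together with those already present in column $j$, at most $(r-1)+(r-1) = 2r-2$ of them, and since $2r-2 < n$ there is always a legal symbol of $\{1,\ldots,n\}$ to place. This produces an $r \times r$ array $Q$ with distinct entries in every row and every column, i.e.\ a Latin rectangle on $\{1,\ldots,n\}$, agreeing with $P$ wherever $P$ was defined. For step (2) I would invoke Ryser's Theorem with $s = r$: the condition ``each symbol occurs at least $r+s-n$ times in $Q$'' reads ``at least $2r-n$ times'', and since $n \ge 2r$ this bound is $\le 0$, so the condition holds automatically. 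Thus $Q$ extends to a Latin square $L$ of order $n$, and since the restriction of $L$ to its first $r$ rows and columns equals $Q$, which in turn equals $P$ on the cells where $P$ is defined, $L$ is the required embedding.

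The argument is short once Ryser's Theorem is on the table, so there is no serious obstacle; the one point meriting care — and the real source of the bound $n \ge 2r$ — is that step (1) must be allowed to introduce the new symbols $r+1,\ldots,n$ inside the $r\times r$ corner. One cannot instead try to complete $P$ to a Latin square of order $r$ on the original alphabet, since that is impossible in general (indeed deciding it is the NP-complete problem mentioned in the introduction). Permitting the extra symbols is exactly what makes the greedy completion in step (1) succeed and, simultaneously, renders Ryser's numerical condition in step (2) vacuous.
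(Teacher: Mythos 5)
Your proof is correct, and it is a genuinely different route from the one the paper takes. The paper does not prove Evans's Theorem directly: it observes that it is the special case $r=s=t\le n/2$ of Cruse's Theorem (all four conditions \textup{(A1)}--\textup{(A4)} become vacuous or automatic when $n\ge 2r$, with $P=R$ itself), and Cruse's Theorem is in turn proved via the Shuffle Lemma, the Gap Filling Lemma (K\"onig), and Hall's theorem. You instead give a direct two-step argument: greedily fill the empty cells of the $r\times r$ corner using the enlarged alphabet $\{1,\ldots,n\}$ --- legitimate because at most $2r-2<n$ symbols are forbidden at any cell --- and then apply Ryser's Theorem with $s=r$, where the condition ``each symbol occurs at least $2r-n$ times'' is vacuous since $n\ge 2r$. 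Your closing remark correctly identifies where the bound $n\ge 2r$ enters (both in making the greedy step succeed and in trivializing Ryser's count). What each approach buys: yours is shorter and essentially self-contained given Ryser's Theorem, and is close in spirit to the classical proof of Evans's result; the paper's derivation costs nothing once the much heavier Cruse machinery is in place, and it exhibits Evans's Theorem as one instance of a uniform set of necessary and sufficient conditions rather than an ad hoc construction.
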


The condition $n\ge 2r$ of Evan's Theorem is the best possible sufficient condition to guarantee that any partial latin square of order $r$ can be embedded inside a square of order $n$. However, many partial latin squares of order $r$ can be extended to a latin square of order $<2r$. In \cite{Cru74} Allan Cruse simultaneously generalized both Ryser's Theorem and Evan's Theorem with a beautiful theorem that provided necessary and sufficient conditions to guarantee when a partially completed latin rectangle could be extended to a latin square.

\begin{theorem}[Cruse's Theorem]
Let $r,s,t\le n$. An $r\times s$ partial latin rectangle, $R$, consisting of $t$ distinct symbols can be completed to a latin square of order $n$ if and only if it can be extended (using the same $t$ symbols) to an $r\times s$ partial latin rectangle, $P$, such that the following four conditions hold. 
\begin{enumerate}[label=\textup{(A\arabic*)},ref=A\arabic*]
\item Each row of $P$ has at least $s+t-n$ entries. \label{rows}
\item Each column $P$ has at least $r+t-n$ entries. \label{columns}
\item Each of the $t$ symbols occurs at least $r+s-n$ times in $P$. \label{symbols}
\item The number of entries in $P$ does not exceed \label{entries}
\[
\frac{rst+(n-r)(n-s)(n-t)}{n}.
\]
\end{enumerate}
\end{theorem}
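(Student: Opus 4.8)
My plan is to prove the two implications separately, with necessity being a short counting argument and sufficiency carrying the real content. For the ``only if'' direction, suppose $R$ extends to a latin square $L$ of order $n$. Take $P$ to be the $r\times s$ array obtained from the top-left $r\times s$ sub-block of $L$ by erasing every cell whose symbol is not one of the $t$ symbols used by $R$; this is a partial latin rectangle that extends $R$ and uses only those $t$ symbols, so it is the candidate for $P$. All four conditions then follow by counting inside $L$. Each of the $t$ symbols occurs exactly once in each row of $L$, and at most $n-s$ of those occurrences can lie outside the first $s$ columns, so each row of $P$ retains at least $t-(n-s)=s+t-n$ of them; this is (\ref{rows}), and (\ref{columns}) is the column analogue. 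For (\ref{symbols}), a fixed symbol occurs $n$ times in $L$, of which at most $n-r$ lie outside the first $r$ rows and at most $n-s$ outside the first $s$ columns, leaving at least $r+s-n$ inside the block. For (\ref{entries}), applying the same inclusion--exclusion to each of the $n-t$ symbols \emph{not} used by $R$ shows these symbols jointly occupy at least $(n-t)(r+s-n)$ cells of the $r\times s$ block, so $P$ has at most $rs-(n-t)(r+s-n)$ entries, and a one-line expansion identifies this quantity with $\bigl(rst+(n-r)(n-s)(n-t)\bigr)/n$.

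For the ``if'' direction, suppose $P$ satisfies (\ref{rows})--(\ref{entries}); I want to fill the empty cells of $P$ with the $n-t$ ``new'' symbols $t+1,\dots,n$ so that the result is an $r\times s$ latin rectangle on $n$ symbols in which every symbol occurs at least $r+s-n$ times, and then invoke Ryser's Theorem. Build the bipartite graph $G$ on the $r$ rows and the $s$ columns with an edge $ij$ whenever cell $(i,j)$ of $P$ is empty; by (\ref{rows}) and (\ref{columns}) every vertex of $G$ has degree at most $n-t$. By K\"onig's edge-colouring theorem $G$ admits a proper edge-colouring with at most $n-t$ colours, which we may take to be the $n-t$ new symbols, and placing new symbol $c$ into the cells of its colour class turns $P$ into an $r\times s$ latin rectangle $P'$ on $\{1,\dots,n\}$: each new symbol lands at most once per row and per column, and the old entries are untouched. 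By (\ref{symbols}) each old symbol still occurs at least $r+s-n$ times in $P'$. Condition (\ref{entries}) is used precisely here: it says $G$ has $m=rs-|P|\ge (n-t)(r+s-n)$ edges, so if the colouring is taken with colour classes as equal in size as possible, each new symbol is used at least $\lfloor m/(n-t)\rfloor\ge r+s-n$ times. Thus all $n$ symbols occur at least $r+s-n$ times in $P'$, and Ryser's Theorem extends $P'$, hence $R$, to a latin square of order $n$.

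The step I expect to be the main obstacle is the existence of a \emph{balanced} proper $(n-t)$-edge-colouring of $G$: an arbitrary proper colouring need not distribute the edges evenly, whereas (\ref{entries}) controls only the \emph{total} number of new cells. I would obtain it either by citing de~Werra's equitable edge-colouring theorem for bipartite multigraphs, or by an elementary swapping argument --- starting from any proper $(n-t)$-edge-colouring and, as long as two colour classes $M_i$, $M_j$ differ in size by at least two, using that $M_i\cup M_j$ is a vertex-disjoint union of paths and even cycles to find a path carrying more $M_i$-edges than $M_j$-edges and recolouring along it, which transfers one edge and strictly decreases $\sum_c|M_c|^2$, so the process halts at a balanced colouring. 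Finally I would check the degenerate cases ($t=n$, or $r+s\le n$, or the side situations in which (\ref{rows}) or (\ref{columns}) is vacuous) separately; in each of them the relevant inequalities are met trivially, $P'$ is obtained directly, and no balancing is required.
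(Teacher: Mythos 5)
Your proof is correct, and its sufficiency half takes a genuinely different route from the paper's. You stay inside the $r\times s$ block: you fill its empty cells with the $n-t$ new symbols via a proper edge-colouring of the empty-cell bipartite graph (K\"onig), equalized so that each colour class has size at least $\lfloor m/(n-t)\rfloor\ge r+s-n$, and then hand the resulting completely filled $r\times s$ latin rectangle to Ryser's Theorem. The paper instead works outward: it adjoins $n-s$ new columns, uses the Shuffle Lemma (a Hall-type argument) to distribute the missing occurrences of the original $t$ symbols into those columns so that every row and column of the resulting $r\times n$ rectangle has at most $n-t$ gaps, fills those gaps with the new symbols via the Gap Filling Lemma (K\"onig's decomposition theorem), and finishes with Hall's theorem on completing $r\times n$ latin rectangles to latin squares. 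Your route is shorter because it black-boxes Ryser's Theorem, at the cost of needing the equitable refinement of K\"onig's edge-colouring theorem; you correctly flag this as the one nontrivial step, and your alternating-path rebalancing argument for it is sound (recolouring a maximal path of $M_i\cup M_j$ carrying a surplus $M_i$-edge preserves properness, since an endpoint of such a path has no other edge of either colour, and it strictly decreases $\sum_c|M_c|^2$). Note how condition \eqref{entries} is used differently in the two proofs: for you it is a lower bound on the number of empty cells of $P$, guaranteeing the colour classes are large enough, whereas in the paper it guarantees that the adjoined region contains at least $(n-s)(r+t-n)$ original symbols, which is the hypothesis of the Shuffle Lemma. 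The paper's longer decomposition is chosen deliberately, since the Shuffle and Gap Filling Lemmas are reused to prove Theorems~\ref{frequency} and \ref{saturated}, which a Ryser-based shortcut would not yield. The necessity direction is essentially the same counting in both, though for \eqref{entries} you count the new symbols forced into the $r\times s$ block while the paper counts the original symbols forced into the $r\times(n-s)$ block to its right; the two counts give the same bound.
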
 
Ryser's Theorem is precisely the special case of Cruse's Theorem with $t=n$, and Evans's Theorem is the special case of Cruse's Theorem with $r=s=t\le \frac{n}{2}$.  

In this paper we shall prove two generalizations of Cruse's Theorem for structures related to latin squares. In Section~\ref{Frequency Cruse Theorem} we present a generalization of Cruse's Theorem that gives necessary and sufficient conditions to complete a partial frequency rectangle to a frequency square. In Section~\ref{Saturated Rectangle Theorem} we exhibit a different generalization of Cruse's Theorem related to completing partial latin rectangles to a partial latin rectangle with a maximum number of filled cells for a given number of rows, columns, and symbols. 

We also provide a proof of Cruse's Theorem that is perhaps easier to visualize than the proof provided by Cruse in  \cite{Cru74}. This alternative proof is outlined in Section~\ref{Proof of Cruse's Theorem}, and two key lemmas in the proof are proved in Sections~\ref{Shuffle Lemma} and \ref{Gap Filling Lemma}.  These lemmas will be the main steps in the alternative proof of Cruse's Theorem and in the proofs given in Sections 6 and 7 of two theorems (Theorems~\ref{frequency} and \ref{saturated}) that generalize it.

In Section~\ref{k-plex} we introduce a generalization of a transversal of a latin square called a partial $k$-plex, and we demonstrate the relationship between partial $k$-plexes and quasi-embeddings of latin squares inside latin squares of larger order. We end with a generalization of Brualdi's Conjecture that every latin square of order $n$ has a partial transversal of size $n-1$.

\section{Generalizations of Cruse's Theorem}\label{Generalizations of Cruse's Theorem}
\subsection{Definitions}

A \emph{latin square of order $n$} is an $n\times n$ array filled with $n$ distinct symbols, each occurring exactly once in each row and exactly once in each column. For $m\le n$ an \emph{$m\times n$ latin rectangle} is an $m\times n$ array filled with $n$ distinct symbols each occuring once in each row and at most once in each column. It will be convenient to assume that the symbols are $1,2,\ldots,n$.

A \emph{partial latin square} of order $n$ is an $n\times n$ array (possibly with empty cells) based on $n$ distinct symbols such that each row and column contains each of the $n$ symbols at most once. Latin squares are special cases of partial latin squares. An \emph{extension} of a partial latin square $P$ is a (partial) latin square $P'$ such that the such that both $P$ and $P'$ share the same entry in row $i$ and column $j$ whenever that cell is nonempty in $P$. 

The entries of a partial latin square can be encoded in a set of ordered triples, $(r,c,s)$, containing the information (row, column, symbol). For each partial latin square $P=\{(i,j,k)\}$ and each permutation $\sigma$ of the triple $(r,c,s)$ we can define a \emph{conjugate} of $P$ to be the partial latin square $\sigma(P)=\{\big(\sigma(i),\sigma(j),\sigma(k)\big)\}$. 

Observe that the symmetry of conditions \eqref{rows}--\eqref{entries} in Cruse's Theorem is consistent with the fact that that $R$ can be completed to an $n\times n$ latin square if and only if its conjugates can be completed to an $n\times n$ latin square.

\subsection{Completing Latin Frequency Rectangles}\label{Frequency Cruse Theorem}
A Latin squares is an example of a frequency square. For $n=\lambda_1+\cdots+\lambda_k$ we define a \emph{frequency square} or F-\emph{square} on the set of symbols $\{x_1,\ldots,x_k\}$ to be an $n\times n$ array where the symbol $x_i$ occurs precisely $\lambda_i$ times in each row and column.  We say that such a square is of type $F(n;\lambda_1,\ldots,\lambda_k)$. It is usually convenient to take $x_i=i$.

A \emph{partial} F\emph{-square} is an $n\times n$ array where the symbol $x_i$ occurs at most $\lambda_i$ times in each row and column. A \emph{partial} F-\emph{rectangle} is defined analogously. We say that an $r\times s$ partial F-rectangle is of type $F(t,\lambda_1,\ldots,\lambda_k)$ if $t=\lambda_1+\cdots+\lambda_k$ where $\lambda_i$ is the smallest number such that each symbol $x_i$ occurs at most $\lambda_i$ times in each row and column. 

\begin{theorem}\label{frequency}
Let $\lambda_1+\cdots+\lambda_k=n$ and $\mu_1+\cdots+\mu_k=t$ be partitions satisfying $0\le \mu_i\le \lambda_i$. An $r\times s$ partial F-rectangle $R$ of type $F(t,\mu_1,\ldots,\mu_k)$ can be extended to an $n\times n$ $F$-square of type $F(n,\lambda_1,\ldots,\lambda_k)$ if and only if $R$ can be extended to a partial F-rectangle, $R'$, also of type $F(t,\mu_1,\ldots,\mu_k)$ such that the following four conditions hold.
\begin{enumerate}[label=\textup{(B\arabic*)},ref=B\arabic*]
\item Each row of $R'$ has at least $s+t-n$ entries. \label{F-rows}
\item Each column $R'$ has at least $r+t-n$ entries. \label{F-columns}
\item The symbol $x_i$ occurs at least $\mu_i(r+s-n)$ times in $R'$. \label{F-symbols}
\item The number of entries in $R'$ does not exceed \label{F-entries}
\[
\frac{rst+(n-r)(n-s)(n-t)}{n}.
\]
\end{enumerate}
\end{theorem}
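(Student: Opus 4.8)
The plan is to reduce Theorem~\ref{frequency} to Cruse's Theorem through the amalgamation correspondence between F-squares and latin squares, organising the argument along the same lines as the alternative proof of Cruse's Theorem in Sections~\ref{Proof of Cruse's Theorem}--\ref{Gap Filling Lemma}. Fix disjoint symbol sets $G_1,\dots,G_k$ with $|G_i|=\lambda_i$, and subsets $G_i'\subseteq G_i$ with $|G_i'|=\mu_i$. Renaming every symbol of $G_i$ to $x_i$ (\emph{amalgamation}) turns a latin square of order $n$ on $\bigcup_i G_i$ into an F-square of type $F(n;\lambda_1,\dots,\lambda_k)$; conversely the $x_i$-support of such an F-square is a $\lambda_i$-regular bipartite graph, which by K\"onig's edge-colouring theorem splits into $\lambda_i$ perfect matchings, so every F-square of this type is an amalgamation of some latin square. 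The same correspondence relates partial latin rectangles on at most $t$ symbols to partial F-rectangles of type $F(t;\mu_1,\dots,\mu_k)$, and amalgamation neither creates nor destroys a filled cell.

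For the ``if'' direction, let $R'$ satisfy \eqref{F-rows}--\eqref{F-entries}. Un-amalgamate $R'$ \emph{equitably}: for each $i$ with $\mu_i\ge1$ the cells of $R'$ carrying $x_i$ form a bipartite graph $H_i$ of maximum degree exactly $\mu_i$ (since the type is $F(t;\mu_1,\dots,\mu_k)$), so a balanced proper $\mu_i$-edge-colouring of $H_i$ (de~Werra) partitions its $m_i=|H_i|$ edges into $\mu_i$ nonempty classes, each of size at least $\lfloor m_i/\mu_i\rfloor$; give the $j$-th class the $j$-th symbol of $G_i'$. The result is a partial latin rectangle $\widehat{R'}$ on exactly $t$ symbols occupying the same cells as $R'$. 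Conditions \eqref{rows}, \eqref{columns}, \eqref{entries} for $\widehat{R'}$ coincide verbatim with \eqref{F-rows}, \eqref{F-columns}, \eqref{F-entries} for $R'$, while \eqref{symbols} follows from $m_i\ge\mu_i(r+s-n)$ (which is \eqref{F-symbols}) together with the balance, each symbol of $G_i'$ occurring at least $\lfloor m_i/\mu_i\rfloor\ge r+s-n$ times. Cruse's Theorem now completes $\widehat{R'}$ to a latin square $\widehat L$ of order $n$. Distributing the $n-t=\sum_i(\lambda_i-\mu_i)$ symbols of $\widehat L$ outside $\bigcup_i G_i'$ among the $G_i$ so that $|G_i|=\lambda_i$, and amalgamating, produces an F-square of type $F(n;\lambda_1,\dots,\lambda_k)$ that contains $R'$, hence contains $R$.

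For the ``only if'' direction, suppose $R$ extends to an F-square $F$ of type $F(n;\lambda_1,\dots,\lambda_k)$. Mimicking Cruse's necessity argument, one wants to take the top-left $r\times s$ block $Q$ of $F$ and delete entries until its type drops to $F(t;\mu_1,\dots,\mu_k)$. The four conditions are calibrated for exactly this: as in Cruse's proof $rs-\frac{rst+(n-r)(n-s)(n-t)}{n}=(n-t)(r+s-n)$, and each symbol $x_i$ occurs at least $\lambda_i(r+s-n)$ times in $Q$ (its $F$-support is a union of $\lambda_i$ permutation matrices, each meeting $Q$ in at least $r+s-n$ cells), so reducing to $\mu_i$ occurrences per line ought to retain at least $\mu_i(r+s-n)$ of them. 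The main obstacle is that, unlike in Cruse's theorem, this deletion involves genuine choices and must keep the entries already present in $R$: a subgraph of maximum degree $\mu_i$ inside a $\lambda_i$-regular bipartite graph need not lie in the union of $\mu_i$ of the $\lambda_i$ factors of any one $1$-factorization (the $6$-cycle together with two opposite edges, with $\mu_i=1$ and $\lambda_i=2$, is a counterexample), so the desired $R'$ cannot simply be read off from the given $F$. This is precisely where the analogue of the Shuffle Lemma is needed: one first shuffles entries within $Q$---preserving the row sums, column sums and symbol counts of its occupancy pattern, hence preserving $R\subseteq Q$ and the analogues of \eqref{F-rows}--\eqref{F-entries}---until the configuration is one in which the required deletion is realized by a family of pairwise disjoint matchings, equivalently until $F$ has been replaced by an F-square extension of $R$ whose $r\times s$ block truncates cleanly; performing the deletion and invoking the conjugate-symmetric counting of Cruse's proof then verifies \eqref{F-rows}--\eqref{F-entries} for the resulting $R'$. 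So the routine parts are the arithmetic behind \eqref{F-entries} and the immediate transfer of \eqref{F-rows}--\eqref{F-entries} across amalgamation, while the real work lies in the necessity direction, where the Shuffle Lemma of Section~\ref{Shuffle Lemma} provides the move that circumvents the $6$-cycle obstruction, just as in the alternative proof of Cruse's Theorem.
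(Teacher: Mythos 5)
Your ``if'' direction is correct and takes a genuinely different route from the paper. You split each symbol $x_i$ of $R'$ into $\mu_i$ sub-symbols via a balanced proper $\mu_i$-edge-colouring of its support (de~Werra), check that the resulting $t$-symbol partial latin rectangle satisfies \eqref{rows}--\eqref{entries} (the balance gives each sub-symbol at least $\lfloor m_i/\mu_i\rfloor\ge r+s-n$ occurrences, with $m_i$ the number of occurrences of $x_i$ in $R'$), invoke Cruse's Theorem as a black box, and amalgamate. The paper instead re-runs its own machinery directly on the F-rectangle: it pads the adjoined $r\times(n-s)$ region $A$ with the missing occurrences of each $x_i$, relabels them round-robin as sub-symbols, applies the Shuffle Lemma with $a=r$, $b=n-s$, $c=r+t-n$, then the Gap Filling Lemma, and finishes with Hall's theorem. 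Your reduction is shorter and isolates the one new ingredient (the equitable splitting); the paper's version stays self-contained and avoids citing de~Werra.

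The ``only if'' direction is where your proposal has a genuine gap. You correctly observe that one cannot simply restrict a splitting of the completed F-square into $\lambda_i$ factors down to $\mu_i$ of them and hope to retain $R$ (your $6$-cycle example is exactly the obstruction), but your proposed remedy --- ``shuffle entries within $Q$ until the required deletion is realized by pairwise disjoint matchings'' --- is not an argument: no procedure is specified, no invariant is tracked, and the Shuffle Lemma of Section~\ref{Shuffle Lemma} does something unrelated (it permutes entries within the rows of an $a\times b$ array to equalize column sizes, and is used only in the sufficiency direction). The paper's necessity proof runs differently: it uses the Gap Filling Lemma to refine the completed F-square $S$ into an honest latin square of order $n$ (deleting $x_i$ leaves $\lambda_i$ holes per line, which are filled with $i_1,\dots,i_{\lambda_i}$), takes $R'$ to be the cells of the $r\times s$ block carrying one of $i_1,\dots,i_{\mu_i}$, and reads \eqref{F-rows}--\eqref{F-entries} off from the necessity half of Cruse's Theorem applied to those $t$ sub-symbols. (Whether that $R'$ can always be arranged to contain $R$ --- precisely the issue your $6$-cycle raises --- is passed over in the paper as well; you have flagged the right difficulty but not resolved it.)
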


As an example, below is a rectangle of type $F(4;2,2)$ with $r=3$ and $s=t=4$. We check the conditions with $n=5$. There are at least $4+4-5=3$ entries in each row and at least $3+4-5=2$ entries in each column. The symbols 1 and 2 each occur at least $2(3+4-5)=4$ times and the number of entries is not more than $((3)(4)(4)+(2)(1)(1))/5=10$. Therefore, the conditions \eqref{F-rows}--\eqref{F-entries} are satisfied, so there exists an extension to a $5\times 5$ F-square of any type compatible with a type $F(4;2,2)$ partial subrectangle. We exhibit two such extensions.
\[
\begin{tikzpicture}[scale=.4]
\begin{scope}
\draw[fill=white] (0,5) rectangle (4,2);
\matrix[yshift=.8cm]
{ 
1&2&1&&\\
&2&1&2&\\
2&&2&1&\\
\\
\\
};
\draw[->] (4.5,2.5) -- (6.5,2.5);
\draw[->] (-.5,2.5) -- (-2.5,2.5);
\draw (2,5) node[above]{$F(4;2,2)$};
\end{scope}
\begin{scope}[xshift=7cm]
\draw[fill=white] (0,0) rectangle(5,5);
\draw (0,5) rectangle (4,2);
\matrix[color=Dark] 
{ 
1&2&1&3&2\\
3&2&1&2&1\\
2&3&2&1&1\\
1&1&2&2&3\\
2&1&3&1&2\\
};
\matrix[yshift=.8cm]
{ 
1&2&1&&\\
&2&1&2&\\
2&&2&1&\\
&&&&\\
&&&&\\
};
\draw (2.5,5) node[above]{$F(5;2,2,1)$};
\end{scope}
\begin{scope}[xshift=-8cm]
\draw[fill=white] (0,0) rectangle(5,5);
\draw (0,5) rectangle (4,2);
\matrix[color=Dark]
{ 
1&2&1&2&2\\
1&2&1&2&2\\
2&1&2&1&2\\
2&2&2&1&1\\
2&1&2&2&1\\
};
\matrix[yshift=.8cm]
{ 
1&2&1&&\\
&2&1&2&\\
2&&2&1&\\
&&&&\\
&&&&\\
};
\draw (2.5,5) node[above]{$F(5;2,3)$};
\end{scope}
\end{tikzpicture}
\]

\subsection{Saturated Rectangles of Type $(r,s,t)$}\label{Saturated Rectangle Theorem}
In this section we introduce a natural generalization of Cruse's Theorem that gives conditions under which a partial latin rectangle can be extended to a partial latin rectangle that is filled with the maximum possible number of symbols.

A \emph{partial latin rectangle of type $(r,s,t)$} is an $r\times s$ array with cells that are either empty or contain entries from a set of $t$ elements so that no symbol occurs more than once in any row or column. A partial latin rectangle of type $(r,s,t)$ is a direct generalization of several latin structures. Any $r\times s$ subrectangle of a partial latin square is a latin rectangle of type $(r,s,t)$ for some $t$. A partial latin rectangle of type $(m,n,n)$ is an $m\times n$ latin rectangle, and a partial latin rectangle of type $(n,n,n)$ is a partial latin square of order $n$. If $P$ is a partial latin rectangle of type $(r,s,t)$ then its conjugate, $\sigma(P)$, is a partial latin rectangle of type $(\sigma(r),\sigma(s),\sigma(t))$.

The maximum possible number of entries in a partial latin rectangle of type $(r,s,t)$ is $\min\{rs,rt,st\}$. To see this bound, we note that conjugation does not change the number of entries. Thus, we can assume without loss of generality, that $t\ge r,s$, so there only $rs$ cells in the rectangle.  

We say that a partial latin rectangle of type $(r,s,t)$ is \emph{saturated} if it contains the maximum possible number of entries, namely
\[
\min\{rs,rt,st\}.
\]

In a sense, a saturated partial latin rectangle of type $(r,s,t)$ is the best we can do since it is as complete as possible for the triple $(r,s,t)$. A partial latin rectangle of type $(n,n,n)$ is a latin square if and only if it is saturated.

The saturation property of a partial latin rectangle of type $(r,s,t)$ is preserved in each of its conjugate rectangles. For example, we can let $\sigma$ swap columns and symbols. Below we display two partial latin rectangles of type $(4,5,4)$, $P$ and $Q$, and their conjugates $\sigma(P)$ and $\sigma(Q)$.
\[
\begin{tikzpicture}[scale=.4]
\begin{scope}
\draw[fill=white] (0,0) grid (5,4);
\draw (2.5,4) node[above]{$P$};
\matrix
{
1&2&3&4&\\
2&4&&3&1\\ 
&3&4&1&2\\
4&&1&2&3\\
};
\end{scope}
\begin{scope}[xshift=7cm]
\draw[fill=white] (0,0) grid (4,4);
\draw (2,4) node[above]{$\sigma(P)$};
\matrix
{
1&2&3&4\\
5&1&4&2\\
4&5&2&3\\
3&4&5&1\\
};
\end{scope}
\begin{scope}[xshift=13cm]
\draw[fill=white] (0,0) grid (5,4);
\draw (2.5,4) node[above]{$Q$};
\matrix
{
1&2&&&3\\
2&1&&&4\\
&&3&4&2\\
&&4&3&1\\
};
\end{scope} 
\begin{scope}[xshift=20cm]
\draw[fill=white] (0,0) grid (4,4);
\draw (2,4) node[above]{$\sigma(Q)$};
\matrix
{
1&2&5&\\
2&1&&5\\
&5&3&4\\
5&&4&3\\
};
\end{scope}
\end{tikzpicture}
\]
Both $P$ and $\sigma(P)$ are saturated partial latin rectangles. However $Q$ and $\sigma(Q)$ are not saturated, although they are maximal in the sense that no further entries can be added without increasing $r$, $s$, or $t$.

Our next definition is derived from terms in the reference \cite{BBJ09} which is a paper that generalizes Hall's Marriage Theorem. Let $\mathcal{A}$ be a collection of sets $A_1,\ldots,A_r$, and let $S=A_1\cup A_2\cup\cdots\cup A_r$.  Let $f\colon S\to \mathbb{N}$ and $g\colon\mathcal{A}\to\mathbb{N}$ be functions. We say that $\mathcal{A}$ has a \emph{system of $f,g$ representatives of $\mathcal{A}$}  if to every set $A_j\in \mathcal{A}$, we associate $g(A_j)$ representatives from $S$, and every vertex $a_i\in S$ is a representative of $f(a_i)$ sets from $\mathcal{A}$.

Cruse's Theorem provides necessary and sufficient conditions to complete a partial latin rectangle of type $(r,s,t)$ to an order $n$ latin square. It is much more difficult to ascertain whether a given partial latin rectangle of type $(r,s,t)$ can be extended to a saturated partial latin rectangle of type $(R,S,T)$ ($r\le R, s\le S, t\le T$). In the special case $R=S=T=n$, the answer is provided by Cruse's Theorem. 

\begin{theorem}
Let $A$ be a partial latin rectangle that can be completed to a saturated latin rectangle of type $(r,s,t)$, where $r\le s\le t$. Then $A$ can be completed to a saturated latin rectangle of type $(R,S,T)$ for each $R\le r$, $S\le s$, $T\ge t$.
\end{theorem}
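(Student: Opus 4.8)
The plan is to notice that, under the stated inequalities, ``saturated'' collapses to ``completely filled grid'' both for the given type $(r,s,t)$ and for every target type $(R,S,T)$, after which the theorem follows simply by deleting rows and columns from a completion of $A$.

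First I would pin down what saturation means in each case. Recall that the maximum number of entries in a partial latin rectangle of type $(p,q,u)$ is $\min\{pq,pu,qu\}$. Since $r\le s\le t$, we get $\min\{rs,rt,st\}=rs$, so a saturated latin rectangle of type $(r,s,t)$ is precisely an $r\times s$ array with every cell filled and no repeated symbol in any row or column. For a target type $(R,S,T)$ the hypotheses give $R\le r\le s\le t\le T$ and $S\le s\le t\le T$, hence $R,S\le T$ and therefore $\min\{RS,RT,ST\}=RS$ as well; thus a saturated latin rectangle of type $(R,S,T)$ is precisely a completely filled $R\times S$ Latin array whose symbols come from a fixed $T$-element set (not all of which need occur).

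Then I would take $B$ to be a completion of $A$ to a saturated latin rectangle of type $(r,s,t)$, so by the previous step $B$ is a fully filled $r\times s$ array. For $A$ to admit any completion of type $(R,S,T)$ at all, its nonempty cells must lie in the first $R$ rows and first $S$ columns (and its symbols among $T$ symbols, which holds since they lie in $[t]$ and $t\le T$), so those cells lie among the cells of $B$. I would then let $C$ be the $R\times S$ array obtained from $B$ by deleting rows $R+1,\dots,r$ and columns $S+1,\dots,s$. Deleting rows and columns cannot introduce a repeated symbol, so $C$ is Latin; it is completely filled, since all $RS$ of its cells were entries of $B$; it uses at most $t\le T$ symbols; and it still contains $A$, because $A\subseteq B$ and every nonempty cell of $A$ survives the deletion. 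Hence $C$ is a saturated latin rectangle of type $(R,S,T)$ that completes $A$, as desired.

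The only place needing any care is the arithmetic in the first step: checking that after the deletions the cell count $RS$ still meets the saturation threshold $\min\{RS,RT,ST\}$, which is exactly where $R\le r$, $S\le s$, $T\ge t$, and $r\le s\le t$ get used. Beyond that there is no real obstacle --- in contrast with the case $R=S=T=n$ (Cruse's Theorem), which genuinely needs the Shuffle and Gap-Filling Lemmas, decreasing the number of rows or columns and enlarging the symbol set only makes saturation easier, and the required completion is obtained by restriction rather than by any new combinatorial construction.
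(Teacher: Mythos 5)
Your proposal is correct and follows essentially the same route as the paper: observe that $r\le s\le t$ forces a saturated rectangle of type $(r,s,t)$ to be completely filled, then delete rows and columns (and enlarge the symbol set), noting that $R,S\le T$ keeps the resulting fully filled $R\times S$ array saturated. You simply spell out the restriction argument and the hypothesis on where $A$'s entries lie in more detail than the paper does.
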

\begin{proof}
A saturated latin rectangle of type $(r,s,t)$ must have all cells filled since $r\le s\le t$. Deleting rows or columns or increasing the number of available symbols still leaves all the cells filled. The condition $R,S\le T$ then guarantees that the rectangle is saturated.
\end{proof}

The four conditions of Cruse's theorem are sufficient conditions to guarantee that a partial latin rectangle of type $(r,s,t)$ can be extended to an saturated latin rectangle of type $(R,S,T)$ whenever at most one of $R$, $S$, and $T$ is $>n$. However, conditions \eqref{symbols} and \eqref{entries} are not necessary in general.

The following theorem gives necessary and sufficient conditions to guarantee that such an extension exists.

\begin{theorem}\label{saturated}
Let $r\le R$, $s\le S$, $t\le T$, and $R,S\le T$. An partial latin rectangle of type $(r,s,t)$ can be completed to an saturated partial latin rectangle of type $(R,S,T)$ if and only if it can be extended to a partial latin rectangle, $P$, of type $(R,s,t)$ such that the following four conditions hold. 
\begin{enumerate}[label=\textup{(C\arabic*)},ref=C\arabic*]
\item Each row of $P$ has at least $s+t-T$ entries. \label{Sat-rows}
\item Each column of $P$ has at least $R+t-T$ entries. \label{Sat-columns}
\item There exist functions $f$ and $g$ such that the collection $\mathcal{A}$ consisting of sets
\[A_i=\{1,2,\ldots,t\}\setminus\{\text{elements in row $i$ of $P$}\}\]
has a system of $f,g$-representatives where\label{Sat-symbols}
\begin{enumerate}[label=\textup{(C3\alph*)},ref=C3\alph*]
\item $f(i)\le S-s$ for each $1\le i\le t$. \label{Sat-a}
\item $g(A_j)\le S-s$ for each $1\le j\le R$ \label{Sat-b}
\item $g(A_j)\ge S-T+|A_j|$ for each $1\le j\le R$ \label{Sat-c}
\item $\sum_{j=1}^R g(A_j)=\sum_{i=1}^t f(i)\ge (S-s)(R+t-T)$.\label{Sat-d} 
\end{enumerate}
\end{enumerate}
\end{theorem}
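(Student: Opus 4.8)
The plan is to prove both directions by passing through an explicit correspondence between a completion $L$ of the given rectangle and the intermediate rectangle $P$. Note first that, since $R,S\le T$, a saturated rectangle of type $(R,S,T)$ is simply an $R\times S$ array with every cell filled, on the alphabet $\{1,\dots,T\}$, latin in rows and columns; I will call $1,\dots,t$ the \emph{small} symbols and $t+1,\dots,T$ the \emph{big} symbols.

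For necessity, given a completion $L$ of the rectangle $A$, I would let $P$ be the sub-rectangle of $L$ on all $R$ rows and the first $s$ columns, discarding every entry that bears a big symbol. Then $P$ extends $A$ and has type $(R,s,t)$, and conditions \eqref{Sat-rows} and \eqref{Sat-columns} are immediate, since a row of $L$ has $S$ distinct entries and a column has $R$ distinct entries, of which at most $T-t$ are big. For \eqref{Sat-symbols} I would declare a small symbol $k$ to \emph{represent} $A_i$ exactly when $k$ occupies a cell of row $i$ of $L$ in a column $>s$; such a $k$ is absent from row $i$ of $P$, so $k\in A_i$, and I would set $g(A_i)$ to be its number of representatives and $f(k)$ the number of occurrences of $k$ among columns $s+1,\dots,S$ of $L$. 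Then \eqref{Sat-a} and \eqref{Sat-b} hold because those columns number $S-s$; \eqref{Sat-c} holds because row $i$ of $L$ has at most $T-t$ big entries, which forces at least $S-T+|A_i|$ small entries past column $s$; and in \eqref{Sat-d} the common value $\sum_i g(A_i)=\sum_k f(k)$ counts the cells of columns $s+1,\dots,S$ holding a small symbol, and is at least $(S-s)(R+t-T)$ because each such column has at most $T-t$ big entries among its $R$ distinct ones.

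For sufficiency I would start from $A\subseteq P$ of type $(R,s,t)$ satisfying \eqref{Sat-rows}--\eqref{Sat-symbols}, adjoin $S-s$ empty columns, and fill the resulting $R\times S$ grid in two steps, exactly as in our proof of Cruse's Theorem. The \emph{shuffle step} places the small symbols into the new columns: invoking the Shuffle Lemma on the bipartite graph whose edges are the representative pairs (its degrees, being the numbers $g(A_i)$ and $f(k)$, are all $\le S-s$ by \eqref{Sat-a}--\eqref{Sat-b}), I would position the representatives so that row $i$ receives exactly its $g(A_i)$ of them -- each lying in $A_i$, hence conflicting with nothing already in row $i$ of $P$ -- so that no symbol recurs within a new column, and so that the $S-s$ new columns carry as nearly equal numbers of small symbols as possible; since there are $\sum_i g(A_i)\ge(S-s)(R+t-T)$ representatives in all by \eqref{Sat-d}, this last property leaves every new column with at least $R+t-T$ small symbols. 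The \emph{gap-filling step} then fills every remaining empty cell with a big symbol: at this point row $i$ has $\bigl(s-|\text{row }i\text{ of }P|\bigr)+\bigl(S-s-g(A_i)\bigr)=S-t+|A_i|-g(A_i)\le T-t$ empty cells by \eqref{Sat-c}, and every column has at most $T-t$ empty cells -- the first $s$ columns by \eqref{Sat-columns}, the new ones because each already carries at least $R+t-T$ entries -- so the Gap Filling Lemma, applied with the $T-t$ big symbols, completes the grid with no symbol repeated in any row or column. The result contains $A$, is fully filled, and uses at most $T$ symbols: a saturated rectangle of type $(R,S,T)$.

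The hard part is the shuffle step -- turning the purely set-theoretic $f,g$-representative system into an actual array while simultaneously preventing column repetitions \emph{and} keeping every new column full enough for the gap-filling step to control its column. Conditions \eqref{Sat-a}, \eqref{Sat-b}, \eqref{Sat-c} are each calibrated to one of these demands (column-injectivity, row-injectivity, and the row count in gap filling), whereas the \emph{global} bound \eqref{Sat-d}, rather than a column-by-column one, supplies exactly the slack that an equitable spreading of the small symbols over the $S-s$ new columns needs; getting all four to cooperate -- and disposing of the degenerate ranges $S=s$, $t=T$, and $R+t-T\le0$, where individual conditions go vacuous -- is where the Shuffle Lemma carries the argument.
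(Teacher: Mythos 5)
Your proposal is correct and follows essentially the same route as the paper's own proof: both directions adjoin the $R\times(S-s)$ block, encode its small-symbol content by the $f,g$-representative system, and then apply the Shuffle Lemma (with $a=R$, $b=S-s$, $c=R+t-T$) followed by the Gap Filling Lemma with the $T-t$ new symbols. Your row-gap computation $S-t+|A_i|-g(A_i)\le T-t$ via \eqref{Sat-c} and the counting arguments for \eqref{Sat-a}--\eqref{Sat-d} match the paper's verification step for step.
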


If $R,S\not\le T$ then we can apply the theorem to one of the conjugates of the rectangle to obtain a saturated rectangle. Then reverse the conjugation.

\begin{example}
Consider the $(5,5,5)$-partial latin square $A$ below. We apply Cruse's Theorem to see that $A$ can be extended to a latin square of order $n\ge8$. The values of $(r,s,t)$ for which $A$  can be completed to an $(r,s,t)$-saturated  partial latin rectangle are precisely $(5,5,7)$, $(5,6,7)$, $(6,5,7)$, $(6,6,7)$ or when $r,s,t\ge5$ with at least one of $r\ge8$, $s\ge8$, or $t\ge8$. 
\[
\begin{tikzpicture}[scale=.4]
\begin{scope}
\draw[fill=white] (0,0) rectangle (5,5);
\matrix 
{
5&&3&4&2\\
&5&4&2&3\\
1&2&5\\
2&1&&5\\
&&2&3&4\\
};
\draw (2.5,0) node[below] {$A$};
\draw[->] (5.5,2.5) -- ++(1,0);
\end{scope}
\begin{scope}[xshift=7cm]
\draw[fill=white] (0,0) rectangle (5,5) rectangle (6,0) rectangle (5,-1) rectangle (0,0);
\matrix[yshift=-.4cm,color=Dark]
{
5&6&3&4&2&1\\
7&5&4&2&3&6\\
1&2&5&7&6&3\\
2&1&6&5&7&4\\
6&7&2&3&4&5\\
4&3&1&6&5&2\\
};
\matrix
{
5&&3&4&2\\
&5&4&2&3\\
1&2&5\\
2&1&&5\\
&&2&3&4\\
};
\end{scope}                                                                  
\end{tikzpicture}
\]
\end{example}

\section{Proof of Cruse's Theorem}\label{Proof of Cruse's Theorem}

Here we provide a proof that differs from the original proof of Cruse in \cite{Cru74}. Throughout the proof we shall assume the square is based on the symbols $1,\ldots, n$. We shall refer to the symbols $1,\ldots,t$ as the ``original $t$ symbols'' and the symbols $t+1,\ldots, n$ as the ``$n-t$ new symbols.'' We begin by showing the necessity of the conditions.

\begin{proof}[Proof of Necessity of \eqref{rows}--\eqref{entries}]
First we place an $r\times s$ partial latin square based on $t$ symbols in the upper left corner of and $n\times n$ array. Assume that $P$ can be completed to an $n\times n$ latin square without adding any of the original $t$ symbols to the original $r\times s$ rectangle. For convenience we draw such an $n\times n$ array below and provide the labels $A$ and $B$ to two of the empty regions of the $n\times n$ square.
\[
\begin{tikzpicture}[scale=.25]
\draw[style=dashed] (5,4) rectangle (8,0);
\draw(0,0) rectangle (8,8);
\draw(0,8) rectangle (5,4);
\draw [decorate,decoration={brace,amplitude=3pt},yshift=5pt]
   (0.1,8)  -- (4.9,8) 
   node [black,midway,above=2pt] {\footnotesize $s$};
\draw [decorate,decoration={brace,amplitude=3pt},yshift=5pt]
   (5.1,8)  -- (7.9,8) 
   node [black,midway,above=2pt] {\footnotesize $n-s$};
\draw [decorate,decoration={brace,amplitude=3pt},xshift=-5pt]
   (0,4.1)  -- (0,7.9) 
   node [black,midway,left=2pt] {\footnotesize $r$};
\draw [decorate,decoration={brace,amplitude=3pt},xshift=-5pt]
   (0,.1)  -- (0,3.9) 
   node [black,midway,left=2pt] {\footnotesize $n-r$};
\draw (2.5,6) node{$P$};
\draw (6.5,6) node{$A$};
\draw (2.5,2) node{$B$};
\end{tikzpicture}
\]

To prove the necessity of condition \eqref{rows} we must show that each row of $P$ contains at least $s+t-n$ entries. There are at most $n-t$ of the new symbols to fill in the empty cells of each row. Since it is possible to fill the empty cells of each row of $P$ with new symbols there must be at least $s-(n-t)=s+t-n$ filled cells in each row. 

Condition \eqref{columns} is equivalent to condition \eqref{rows} by swapping rows and columns and reversing the roles of $r$ and $s$. Likewise, condition \eqref{symbols} is equivalent to condition \eqref{rows} by swapping symbols and rows and reversing the roles of $r$ and $t$. Alternatively, one can note the necessity of condition \eqref{symbols} by 
observing that the maximum number of times that any symbol can occur outside $P$ is $(n-s)+(n-r)=2n-s-r$. Thus if a symbol occurs $j$ times in $P$, then $j+(2n-s-r)\ge n$, so $j\ge s+r-n$.

To see the necessity of condition \eqref{entries} we consider the rectangle $A$. Each column of $A$ requires $r$ entries, at most $n-t$ of which are from the set of $n-t$ new symbols. Thus each of the $n-s$ columns of $A$ will require at least $r-(n-t)$ original symbols, so $A$ will consist of at least $(n-s)(r+t-n)$ original symbols. Each of the $r$ rows requires $t$ original symbols, so $P$ can contain at most
\[
rt-(n-s)(r+t-n)=\frac{rst+(n-r)(n-s)(n-t)}{n}
\]
of the original symbols.
\end{proof}

\begin{proof}[Proof of sufficiency of \eqref{rows}--\eqref{entries}] It is enough to show that these conditions allow us to complete $P$ to an $r\times n$ latin rectangle. Then we can invoke a well-known theorem of Hall \cite{Hal45,Rys51} that states that every $r\times n$ latin rectangle can be completed to an $n\times n$ latin square . 

We break up the task of obtaining an $r\times n$ latin rectangle into two steps. 
\begin{enumerate}
\item[(i)]
We extend $P$ to an $r\times n$ partial latin rectangle by attaching $n-s$ empty columns on the right of $P$. Then we fill in some entries of the new columns by with original symbols in such a way that the resulting $r\times n$ rectangle  contains each of the original $t$ symbols $r$ times and no row or column has more than $n-t$ empty cells. We show that this procedure is always possible in the Shuffle Lemma.
\item[(ii)]
We then fill in the $r(n-t)$ empty cells with the $n-t$ new symbols to get an $r\times n$ latin rectangle. We show that this procedure is always possible in the Gap Filling Lemma. \qedhere
\end{enumerate}
\end{proof}

For example, below is a rectangle with $r=6$, $s=4$, $t=5$, and $n=7$.
\[
\begin{tikzpicture}[scale=.4]
\begin{scope}
\draw[fill=white] (0,0) rectangle (7,6);
\draw (0,0) rectangle (4,6) rectangle (5,0) rectangle (6,6) rectangle (7,0);
\matrix 
{
1&&4&5&&&&\\
&3&2&&&&\\
4&5&1&2&&&\\
3&4&&&&&\\
&2&5&3\\
2&1&3&4\\
};
\draw[->] (7.5,3) --node[above]{(i)} (9.5,3);
\end{scope}
\begin{scope}[xshift=10cm]
\draw[fill=white] (0,0) rectangle (7,6);
\draw (0,0) rectangle (4,6) rectangle (5,0) rectangle (6,6) rectangle (7,0);
\matrix 
{
1&&4&5&2&3&\\
&3&2&&4&5&1\\
4&5&1&2&&&3\\
3&4&&&1&2&5\\
&2&5&3&&1&4\\
2&1&3&4&5\\
};
\draw[->] (7.5,3) --node[above]{(ii)} (9.5,3);
\end{scope}
\begin{scope}[xshift=20cm]
\draw[fill=white] (0,0) rectangle (7,6);
\draw (0,0) rectangle (4,6) rectangle (5,0) rectangle (6,6) rectangle (7,0);
\matrix
{
1&\textbf{6}&4&5&2&3&\textbf{7}\\
\textbf{6}&3&2&\textbf{7}&4&5&1\\
4&5&1&2&\textbf{7}&\textbf{6}&3\\
3&4&\textbf{7}&\textbf{6}&1&2&5\\
\textbf{7}&2&5&3&\textbf{6}&1&4\\
2&1&3&4&5&\textbf{7}&\textbf{6}\\
};
\end{scope}
\end{tikzpicture}
\]

\section{The Shuffle Lemma}\label{Shuffle Lemma}
Step (i)  in the proof of sufficiency of \eqref{rows}--\eqref{entries} will follow as an application of the following lemma.

\begin{lemma}[Shuffle Lemma]
Let $a,b,c$ be whole numbers. Suppose an $a\times b$ array has at least $bc$ filled cells such that no symbol occurs more than $b$ times. Then it is possible to permute the individual rows so that each column has at least $c$ entries and no symbol occurs more than once in any column.
\end{lemma}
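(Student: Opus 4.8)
The plan is to reformulate the Shuffle Lemma as a bipartite flow / Hall-type matching problem and to discharge it via a deficiency version of Hall's theorem (or, equivalently, an integral max-flow argument). Consider the $a\times b$ array $M$. Each row $i$ of $M$ is a multiset of symbols; since no symbol occurs more than $b$ times in the whole array, and certainly no symbol occurs more than once in a given row (rows of a partial latin rectangle have distinct symbols), each row is actually a \emph{set} of symbols $R_i\subseteq\{1,\ldots,b\}$-worth of entries — more precisely, $|R_i|$ equals the number of filled cells in row $i$. ``Permuting the individual rows'' means choosing, independently for each row $i$, a bijection placing the $|R_i|$ entries of row $i$ into $|R_i|$ of the $b$ column-slots; the constraint is that in each column the chosen symbols are distinct, and the objective is that each column receives at least $c$ symbols.

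First I would set up the bipartite structure. Build a bipartite graph (or capacitated network) with one side indexed by the ``row-entries'' — i.e.\ pairs $(i,\text{one filled cell of row }i)$, equivalently think of row $i$ as contributing $|R_i|$ tokens each labelled by its symbol — and the other side indexed by columns $1,\ldots,b$. Connect a token of row $i$ carrying symbol $v$ to column $j$ iff no other token already committed to column $j$ carries symbol $v$; this ``no repeated symbol in a column'' constraint is exactly the obstruction that makes this nontrivial. The clean way to encode it: route a flow of value $1$ from a source through a node for each (symbol $v$, column $j$) pair with capacity $1$ on the edge into column $j$, so that column $j$ can receive symbol $v$ at most once; cap each column's outflow to the sink at $b$ (it has only $b$ slots) and require inflow $\ge c$. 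Then a fractional — hence, by integrality of network flow, integral — feasible routing is precisely a valid shuffle meeting the column lower bounds.

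Second, I would verify feasibility by checking the cut / Hall condition. The total number of tokens is $\sum_i |R_i|\ge bc$, so on average each column can be given $c$ tokens; the only way this could fail is if some set of columns cannot absorb enough \emph{distinct} symbols. Here the hypothesis ``no symbol occurs more than $b$ times'' is what saves us: a symbol $v$ that appears $m_v\le b$ times across the array can be deposited into $\min(m_v,b)=m_v$ distinct columns, so symbols are never ``too concentrated'' to spread out. Quantitatively, for any set $T$ of columns one must show the columns in $T$ can collectively receive at least $c|T|$ tokens subject to distinctness; the worst case is when many tokens share a symbol, but since each symbol contributes at most one token per column and at most $b$ tokens total, the counting closes. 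I would phrase this as: the deficiency is controlled because removing any $b-|T|$... — more carefully, I would dualize, take a minimum cut, and show its capacity is at least $bc$, using $\sum_i|R_i|\ge bc$ together with the per-symbol bound $m_v\le b$.

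The main obstacle I expect is precisely pinning down the cut estimate: translating ``$\sum|R_i|\ge bc$ and $m_v\le b$'' into ``every column-subset has enough distinct-symbol capacity,'' because the interaction between the global count and the per-symbol ceiling is where a naive averaging argument can leak. A likely clean route around this is to prove the stronger statement by a greedy/exchange argument instead of flows: order the columns, and fill them one symbol-class at a time, always placing a copy of the currently ``most frequent remaining symbol'' into a column that does not yet contain it and currently has the fewest entries; then show this greedy never gets stuck before every column reaches $c$, using that a symbol with $m_v$ remaining copies always has $m_v\le b\le(\text{number of columns})$ available columns to go into. Either way, once step~(i) of the sufficiency proof can quote this lemma, the construction of the $r\times n$ rectangle proceeds as outlined before the Gap Filling Lemma.
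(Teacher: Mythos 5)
Your reduction to a bipartite flow problem drops the constraint that makes the lemma nontrivial: ``permuting the individual rows'' means the entries of row $i$ are rearranged \emph{within row $i$}, so the tokens coming from row $i$ must land in pairwise distinct columns. The network you describe (source $\to$ tokens $\to$ (symbol, column) nodes $\to$ columns $\to$ sink) enforces only symbol--column distinctness and a column capacity (which, incidentally, should be $a$, not $b$: a column of an $a\times b$ array has $a$ cells), so an integral feasible flow may send two entries of the same row into the same column, which is not a shuffle. The same omission infects your greedy fallback, which places copies of a symbol into columns ``not yet containing it'' without checking whether the row of that copy already has an entry there. Once you add the row--column constraint, each placement must simultaneously consume an $(i,j)$ slot and a $(v,j)$ slot; that is no longer a single-commodity flow or a bipartite vertex matching, so ``integrality of network flow'' cannot be invoked. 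The unverified cut estimate that you flag as the main obstacle is actually the lesser of the two problems.

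The statement can be rescued in your spirit, but the right tool is K\"onig's edge-coloring theorem rather than max-flow: form the bipartite multigraph whose parts are the rows and the symbols, with one edge per filled cell; its maximum degree is at most $b$ (each row has at most $b$ entries, and each symbol occurs at most $b$ times by hypothesis), so it admits a proper edge-coloring with $b$ colors, i.e.\ an assignment of the entries to the $b$ columns avoiding both row- and symbol-conflicts. One then equalizes the color classes, which is legitimate because the average class size is at least $bc/b=c$. The paper carries out essentially this program in elementary language: it pads the array with distinct placeholder symbols, extracts the columns one at a time as systems of distinct representatives via Hall's Marriage Theorem --- with an augmenting-chain argument ensuring that every ``necessary'' symbol (one still occurring $b$ times among the remaining entries) is captured by the current column, so the relevant degree bound drops by one at each stage --- deletes the placeholders, and finally balances any two columns by a finite chain of in-row swaps. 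If you want to retain a matching-theoretic flavor, you must at least restructure your argument to build one column at a time; that is exactly where Hall's theorem, not a global flow, does the work.
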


\begin{example}
We display an example below where $a=6$, $b=3$, and $c=2$ that illustrates the workflow of the proof of the Shuffle Lemma.
\[
\begin{tikzpicture}[scale=.4]
\begin{scope}
\draw[fill=white] (0,0) rectangle (3,6);
\matrix 
{
2&3&\\
1&4&6\\
3&&\\
1&2&5\\
1&6\\
5\\
};
\draw[->] (3.5,3) --node[above]{(a)} (5.5,3);
\end{scope}
\begin{scope}[xshift=6cm]
\draw[fill=white] (0,0) rectangle (3,6);
\matrix
{
2&3&\Dark{a}\\
1&4&6\\
3&\Dark{b}&\Dark{c}\\
1&2&5\\
1&6&\Dark{d}\\
5&\Dark{e}&\Dark{f}\\
};
\draw[->] (3.5,3) --node[above]{(b)} (5.5,3);
\end{scope}
\begin{scope}[xshift=12cm]
\draw[fill=white] (0,0) rectangle (3,6);
\matrix 
{
2&3&\Dark{a}\\
4&6&1\\
3&\Dark{b}&\Dark{c}\\
1&2&5\\
6&1&\Dark{d}\\
5&\Dark{e}&\Dark{f}\\
};
\draw[->] (3.5,3) --node[above]{(c)} (5.5,3);
\end{scope}
\begin{scope}[xshift=18cm]
\draw[fill=white] (0,0) rectangle (3,6);
\matrix 
{
2&3&\\
4&6&1\\
3&&\\
1&2&5\\
6&1&\\
5&&\\
};
\draw[->] (3.5,3) --node[above]{(d)} (5.5,3);
\end{scope}
\begin{scope}[xshift=24cm]
\draw[fill=white] (0,0) rectangle (3,6);
\matrix
{
&3&2\\
4&6&1\\
3&&\\
1&2&5\\
&1&6\\
5&&\\
};
\end{scope}
\end{tikzpicture}
\]
\end{example}

\begin{proof}
The proof will be accomplished using four steps.
\begin{enumerate}
\item[(a)] Introduce sufficiently many new "placeholder" symbols so the array is full.
\item[(b)] Permute individual rows to get column permutations.
\item[(c)] Delete the placeholder symbols.
\item[(d)] Balance the columns.
\end{enumerate} 

Steps (a) and (c) are trivial to accomplish. Thus we must show that steps (b) and (d) can always be accomplished.

First complete step (a) by adding sufficiently many distinct new symbols to $R$ so that it has $ab$ entries. To show that (b) is possible we shall show that we can get a single column $C$ by selecting one symbol from each row so that no symbol occurs more than $b-1$ times among the remaining symbols. If we can guarantee that it is always possible to obtain one column, then we can repeat our argument $b-1$ times to get the remaining columns.

To obtain our column, we shall apply Hall's Marriage Theorem. 
\begin{theorem}[Hall's Marriage Theorem]
Let $S_1,\ldots,S_n$ be sets. It is possible to select distinct representatives $s_1\in S_1,\ldots,s_n\in S_n$ if and only if for each $m\le n$, the union of $m$ sets $S_{i_1}\cup\cdots\cup S_{i_m}$ contains at least $m$ distinct elements.
\end{theorem}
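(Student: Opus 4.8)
The plan is to establish the nontrivial (sufficiency) direction by induction on $n$; the other direction is immediate, since if distinct representatives $s_1\in S_1,\ldots,s_n\in S_n$ exist, then for any $m$ indices $i_1,\ldots,i_m$ the elements $s_{i_1},\ldots,s_{i_m}$ are $m$ distinct members of $S_{i_1}\cup\cdots\cup S_{i_m}$. For the converse, the base case $n=1$ is trivial because the hypothesis forces $S_1\neq\emptyset$.

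For the inductive step, assume the statement for every family of fewer than $n$ sets, and suppose $S_1,\ldots,S_n$ satisfy the union hypothesis (call it \emph{Hall's condition}). I would split into two cases according to whether some proper nonempty subfamily is \emph{tight}, meaning its union has size exactly equal to the number of sets in it. If no such subfamily exists, pick any $x\in S_n$ and set $S_i'=S_i\setminus\{x\}$ for $i<n$. Deleting a single element decreases each union size by at most $1$, and every subfamily of $S_1,\ldots,S_{n-1}$ has union size at least one more than its cardinality (otherwise it would be tight), so the $S_i'$ satisfy Hall's condition. By induction they admit distinct representatives, none equal to $x$, and adjoining $s_n=x$ completes this case.

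If instead some proper nonempty subfamily is tight, then after reindexing we may write it as $S_1,\ldots,S_k$ with $1\le k<n$ and $|S_1\cup\cdots\cup S_k|=k$. This subfamily inherits Hall's condition, so by induction it has distinct representatives $s_1,\ldots,s_k$, which then necessarily exhaust the $k$-element set $S_1\cup\cdots\cup S_k$. Now set $S_j''=S_j\setminus(S_1\cup\cdots\cup S_k)$ for $k<j\le n$. The key point is that these $n-k$ sets again satisfy Hall's condition: for any $m$ of them, applying the hypothesis to the combined family of $m+k$ sets $S_1,\ldots,S_k,S_{j_1},\ldots,S_{j_m}$ gives a union of size at least $m+k$, and removing the $k$ elements of $S_1\cup\cdots\cup S_k$ leaves $|S_{j_1}''\cup\cdots\cup S_{j_m}''|\ge m$. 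By induction the $S_j''$ have distinct representatives $s_{k+1},\ldots,s_n$; since each $s_j$ lies outside $S_1\cup\cdots\cup S_k$ it differs from every $s_i$ with $i\le k$, so $s_1,\ldots,s_n$ are distinct representatives for the whole family.

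The main (indeed essentially only) obstacle is verifying that Hall's condition is inherited under the two reductions — in particular in the second case, where one must simultaneously extract representatives for the tight block and check that the complementary block still satisfies the hypothesis; the counting needed is exactly the displayed inequality. The remainder is bookkeeping. One could alternatively deduce the theorem from König's theorem on bipartite graphs or from an augmenting-path argument, but the induction above is self-contained and is the version I would record here.
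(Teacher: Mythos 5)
Your argument is the standard Halmos--Vaughan induction and it is correct: the necessity direction is immediate, the base case is trivial, and in the inductive step both reductions genuinely preserve Hall's condition for exactly the reasons you give (in the no-tight-subfamily case every proper nonempty subfamily has a one-element cushion that survives deleting $x$; in the tight case the displayed count $|S_{j_1}''\cup\cdots\cup S_{j_m}''|\ge(m+k)-k=m$ is the right bookkeeping, and the new representatives avoid the old ones because they lie outside the $k$-element tight union). The only point worth comparing is that the paper does not prove this statement at all: Hall's Marriage Theorem is quoted inside the proof of the Shuffle Lemma as a classical black box and simply invoked, so there is no ``paper's proof'' to match. Your write-up therefore supplies something the paper deliberately omits; it is self-contained and could stand as a cited appendix, though in the context of this paper a reference to Hall's original 1935 result (or to the Halmos--Vaughan proof) would be the more conventional choice than reproving it.
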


Let the set $S_i$  be the $i$-th row of $R$. We need to show that the union of any $m$ distinct sets $S_{i_1},\ldots,S_{i_m}$ contains at least $m$ distinct elements. In search of a contradiction we suppose that
\begin{equation}
\label{representatives}
\left|S_{i_1}\cup\cdots\cup S_{i_m}\right|<m 
\end{equation}
for some $m\le a$. It then follows that one of the elements of $S_{i_1}\cup\cdots\cup S_{i_m}$ must occur more than $b$ times among the $bm$ symbols in the sets $S_{i_1},\ldots,S_{i_m}$. However, no element occurs in more than $b$ of the rows. We have arrived at a contradiction, so the inequality \eqref{representatives} must be false for all $m$. Therefore, by Hall's Marriage Theorem, we can obtain a set of distinct representatives, one from each row, to obtain our desired column, $C$, of distinct symbols.

Thus it is possible to choose a column of distinct symbols. However, we must also show that we can choose our column so that it contains each symbol of $R$ that occurs $b$ times in $R$.

Each symbol that occurs $b$ times in $R$ shall be called a \emph{necessary symbol}. Assume that the column $C$ is chosen to contain the maximum possible number of necessary symbols. Seeking a contradiction, we suppose that some necessary symbol $x_0$ does not occur in $C$. We shall use the notation $x\prec y$ if $y$ is a symbol in $C$ that shares a row with $x$. 

There are two possible cases. The first case is that there exists some sequence of distinct symbols $x_0\prec x_1\prec x_2\prec\cdots\prec x_k$ which terminates in a non-necessary symbol $x_k$. In this case we can replace $x_{i+1}$ in $C$ with $x_i$, which would then give us one more necessary symbol in $C$, contradicting the maximality of $C$.

The second case is that every sequence of distinct symbols $x_0\prec x_1\prec x_2\prec\cdots\prec x_k$ consists only of necessary symbols. If there are $d$ rows with a representative in $C$ that appear in at least one such a sequence, then the $d$ rows consist of $d$ necssary symbols each occuring $b$ times. Then each of the $d$ necessary symbols would already occur in $C$, contradicting the assumption that $x_0$ is not in $C$.

Now we have shown that $C$ can be chosen so that each necessary symbol occurs in $C$. Repeat this process $b-1$ times to get the columns of $C$. This completes step (b).

For step (c) we delete the dummy symbols. Now for step (d) we must show that is possible to balance the columns so that each column has at least $c$ entries. It will be sufficient to show that by permuting individual rows, any two columns can be balanced so that the number of entries in the each column is the same or differs by 1, while no symbol occurs twice in the same column. 

To show that this works in general, we may assume, without loss of generality, that the left column has more entries than the right column. Then the left column contains some symbol, $a$, that is next to an empty cell. Transfer the entry $a$ to the right column. 
\[
\begin{tikzpicture}[scale=.4]
\begin{scope}
\draw[fill=white] (0,0) grid (2,1);
\matrix 
{
a&\\
};
\draw[->] (2.5,.5) -- (3.5,.5);
\end{scope}
\begin{scope}[xshift=4cm]
\draw[fill=white] (0,0) grid (2,1);
\matrix
{
{}&a\\
};
\end{scope}
\end{tikzpicture}
\]
If $a$ occurs elsewhere in the right column, then swap the entries of the row containing it. If this places two of some symbol in the first column, then once again we swap the row that we have not yet altered. Continue on swapping rows until no column has the same symbol twice. This process must end  after a finite number of steps because each row will be swapped at most once.
\[
\begin{tikzpicture}[scale=.4]
\begin{scope}
\draw[fill=white] (0,0) grid (2,3);
\matrix
{
c&b\\
a&\\
b&a\\
};
\draw[->] (2.5,1.5) -- (3.5,1.5);
\end{scope}
\begin{scope}[xshift=4cm]
\draw[fill=white] (0,0) grid (2,3);
\matrix
{
c&b\\
&a\\
b&a\\
};
\draw[->] (2.5,.5) -- (3.5,.5);
\end{scope}
\begin{scope}[xshift=8cm]
\draw[fill=white] (0,0) grid (2,3);
\matrix
{
c&b\\
&a\\
a&b\\
};
\draw[->] (2.5,2.5) -- (3.5,2.5);
\end{scope}
\begin{scope}[xshift=12cm]
\draw[fill=white] (0,0) grid (2,3);
\matrix
{
b&c\\
&a\\
a&b\\
};
\end{scope}
\end{tikzpicture} 
\]
If the number of symbols in the two columns remains unchanged, then the process above ended by shifting a symbol from the right column to the left column. Thus the left column must have another symbol that is next to an empty cell. We can repeat this process without affecting the rows that we previously swapped. Eventually the right column must gain a new symbol because there are more rows with a single symbol in the left column than there are with a single symbol in the right column.

Thus any two columns may be balanced so that the column size differs by at most one. Since the average number of symbols per column will be at least $c$, then each column can be chosen to contain at least $c$ elements. This completes the proof.
\end{proof}

To perform step (i) of the proof of sufficiency of \eqref{rows}--\eqref{entries},  place the original symbols that do not occur in row $i$ of $P$ in row $i$ of region $A$. Then invoke the Shuffle Lemma with $a=r$, $b=n-s$, and $c=r+t-n$.

\section{The Gap Filling Lemma}\label{Gap Filling Lemma}
In this section our goal is to show that a partial latin rectangle $R$ based on $t$ symbols with $\le n-t$ empty cells in each row and column can be completed to a latin rectangle using $n-t$ new symbols.

\begin{lemma}[Gap Filling Lemma]
In a rectangular array, let $S$ be a subset of cells consisting of at most $k$ cells in any row or column, then the cells of $S$ can be filled in with $k$ symbols so that none of the $k$ symbols occurs twice in any row or column.
\end{lemma}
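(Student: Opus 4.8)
The plan is to reduce the statement to an edge-colouring problem on a bipartite graph and then apply König's edge-colouring theorem. First I would build a bipartite multigraph $G$ whose two vertex classes are the rows and the columns of the array, and where each cell of $S$ lying in row $i$ and column $j$ contributes one edge between the vertex $i$ and the vertex $j$. The hypothesis that $S$ has at most $k$ cells in any row or column says exactly that every vertex of $G$ has degree at most $k$, i.e. $\Delta(G)\le k$. A proper edge-colouring of $G$ with $k$ colours is then precisely an assignment of one of $k$ symbols to each cell of $S$ such that no two cells in a common row receive the same symbol and no two cells in a common column receive the same symbol — which is what we want.

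The key step is König's line-colouring theorem: every bipartite (multi)graph $G$ has chromatic index $\chi'(G)=\Delta(G)$. Since $\Delta(G)\le k$, we obtain a proper edge-colouring of $G$ using at most $k$ colours; pad with unused colours if fewer than $k$ are needed so that exactly the symbols $1,\ldots,k$ are available. Translating back, colour class $c$ tells us which cells of $S$ to fill with symbol $c$, and properness of the colouring gives the required Latin condition on $S$. I would include a one-line proof of the bipartite edge-colouring fact for self-containedness: one can realize it by repeatedly extracting a matching that saturates every maximum-degree vertex (such a matching exists in a bipartite graph by a defect-version of Hall's theorem, which the paper already invokes) and peeling it off as a single colour class, decreasing $\Delta$ by one each time; after at most $k$ rounds every edge is coloured.

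The main obstacle is making the peeling argument precise: one must check that at each stage a matching exists that covers all current vertices of maximum degree, not merely some large matching. This is where Hall's condition has to be verified — for the subgraph induced on the maximum-degree vertices, any set $X$ of them sends $k\cdot|X|$ edge-endpoints into its neighbourhood, and since every vertex on the other side absorbs at most $k$ of these, the neighbourhood has size at least $|X|$, giving the Hall condition and hence the desired matching. Removing such a matching lowers the degree of every maximum-degree vertex by exactly one and lowers no other vertex below the new maximum, so $\Delta$ drops to $\Delta-1$ and the induction continues. Everything else — constructing $G$, padding colours, and reading the colouring back as a symbol assignment — is routine bookkeeping. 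Since the paper later uses this lemma with $k=n-t$ on the empty cells of a partial latin rectangle with at most $n-t$ gaps per line, this is exactly the form needed for step (ii) of the sufficiency argument.
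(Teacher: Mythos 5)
Your proof is correct and is essentially the same as the paper's: the paper invokes K\"onig's theorem in its matrix form (a nonnegative integral matrix with line sums at most $k$ is a sum of $k$ partial permutation matrices), which is exactly the bipartite edge-colouring statement $\chi'(G)=\Delta(G)$ that you use, with the $k$ summands corresponding to your $k$ colour classes. The only caveat is in your optional self-contained peeling argument: a matching must saturate the maximum-degree vertices on \emph{both} sides of the bipartition, so verifying Hall's condition for one side alone is not quite enough (one standardly takes such a matching for each side and merges them via the components of their union), but since you are invoking K\"onig's theorem as the key step anyway, this does not affect the correctness of the proof.
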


We can always accomplish step (ii) of the proof of sufficiency of \eqref{rows}--\eqref{entries} by appying the Gap Filling Lemma with $k=n-t$. We arrive easily at a proof of the Gap Filling Lemma by invoking a nice result of K\"{o}nig \cite[p. 128]{BR91}.

\begin{theorem}[K\"{o}nig]
If $A$ is a nonnegative integral matrix, each of whose row and column sums does not exceed the positive integer $k$, then $A$ is a sum of $k$ $(0,1)$-matrices with at most one 1 in each row and column.
\end{theorem}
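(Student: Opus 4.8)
The plan is to reduce the statement to the special case of a \emph{square} matrix all of whose line sums equal $k$, and then to peel off one permutation matrix at a time using Hall's Marriage Theorem, inducting on $k$.

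First I would carry out the reduction. Given an $m\times n$ nonnegative integral matrix $A=(a_{ij})$ with every row sum $R_i\le k$ and every column sum $C_j\le k$, I would embed it as the top-left block of an $(m+n)\times(m+n)$ nonnegative integral matrix $B$ whose every row and column sum equals exactly $k$. Writing $r_i=k-R_i$ and $c_j=k-C_j$ for the deficiencies, take the $m\times m$ top-right block to be the diagonal matrix with entries $r_i$, the $n\times n$ bottom-left block to be the diagonal matrix with entries $c_j$, and fill the remaining $n\times m$ bottom-right block with any nonnegative integral matrix whose row sums are $C_1,\dots,C_n$ and whose column sums are $R_1,\dots,R_m$ (such a matrix exists since $\sum_j C_j=\sum_i R_i$). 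A direct check shows every line sum of $B$ equals $k$, and since any entry of a nonnegative matrix is bounded by any line sum through it, every entry of $B$ is $\le k$. Once $B$ is written as a sum of $k$ permutation matrices, restricting each summand to the top-left $m\times n$ block yields $k$ $(0,1)$-matrices with at most one $1$ in each row and column whose sum is $A$, which is the desired conclusion.

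Next I would treat the square case by induction on $k$. The case $k=0$ is trivial. For the inductive step, I would produce a single permutation matrix $P$ supported on the nonzero positions of $B$. Form the bipartite graph joining row $i$ to column $j$ exactly when $b_{ij}>0$; a perfect matching in this graph is precisely such a $P$. To verify Hall's condition, take any set $S$ of rows: all the mass contained in these rows, namely $k\lvert S\rvert$, must lie in the columns of the neighborhood $N(S)$, and those columns together carry total mass $k\lvert N(S)\rvert$, so $k\lvert N(S)\rvert\ge k\lvert S\rvert$ and hence $\lvert N(S)\rvert\ge\lvert S\rvert$. Hall's Marriage Theorem then supplies the matching, i.e.\ a permutation matrix $P$ entrywise dominated by $B$. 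The difference $B-P$ is nonnegative integral with every line sum equal to $k-1$, so by induction it is a sum of $k-1$ permutation matrices, and adjoining $P$ expresses $B$ as a sum of $k$ permutation matrices.

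The main obstacle is the inductive step, and specifically the verification that the support graph of $B$ has a perfect matching; everything hinges on this Hall-condition counting argument, which succeeds precisely because all line sums of $B$ are equal. The embedding is exactly the device that makes this possible: in the original matrix $A$ the rows and columns whose sum is strictly less than $k$ need not be covered by the matching we extract, and equalizing all line sums converts the delicate requirement ``cover every tight line'' into the clean statement that a matrix with uniform line sums always possesses a positive diagonal. The remaining points — that the block construction produces the claimed line sums, and that restricting a permutation matrix gives a $(0,1)$-matrix with at most one $1$ per line — are routine.
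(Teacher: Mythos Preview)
The paper does not actually prove K\"onig's theorem; it simply quotes the result from Brualdi--Ryser \cite{BR91} and then invokes it to prove the Gap Filling Lemma. So there is no ``paper's own proof'' to compare your argument against.

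That said, your argument is correct and is essentially the standard proof. The embedding step is fine: the diagonal blocks with entries $r_i$ and $c_j$ fix the first $m$ row sums and first $n$ column sums at $k$, and the bottom-right $n\times m$ block with row sums $C_j$ and column sums $R_i$ exists by the elementary transportation (northwest-corner) construction because $\sum_j C_j=\sum_i R_i$. The Hall-condition count is exactly right, and the induction on $k$ is clean. One small remark: your sentence ``since any entry of a nonnegative matrix is bounded by any line sum through it, every entry of $B$ is $\le k$'' is true but not actually needed anywhere in the argument, so you could drop it.
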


\begin{proof}[Proof of Gap Filling Lemma]
Let $A$ be the $n\times n$ (0,1) matrix with 1 in the locations corresponding to the the subset $S$ and 0 in the other locations. The row and column sum of such a matrix does not exceed $k$, so by K\"{o}nig's Theorem $A$ is a sum of $k$ $(0,1)$-matrices, each with at most one 1 in each row and column:
\[
A=P_1+P_2+\cdots+P_k.
\]

We can obtain a labeling of the cells of $S$ from the nonzero entries of the matrix
\[
1\cdot P_1+2\cdot P_2+\cdots+k\cdot P_k. 
\]
\end{proof}

We note that the obvious generalization of the Gap Filling Lemma to higher dimensions fails. For example, below we display the skeleton of a 3-dimensional polyomino (shaded) and its complementary polyomino (unshaded), each of which has precisely 2 cells in each row, column, and file. 
\[
\newcommand{\rec}[2]{\draw[fill=Dark!50] (#1,#2) rectangle ++(-1,-1);}
\begin{tikzpicture}[scale=.6,
every node/.style={minimum size=1cm},on grid]
\begin{scope}[every node/.append style={yslant=-.5},yslant=-.5,xscale=.5]
\draw[color=Dark,style=dashed] (0,0)-- ++(15,3.6);
  \draw (0,0) grid (4,4);
\rec{3}{4}\rec{4}{4}
\rec{3}{3}\rec{4}{3}
\rec{2}{1}\rec{2}{2}
\rec{1}{1}\rec{1}{2}
\begin{scope}[xshift=5cm,yshift=1.2cm]
  \draw (0,0) grid (4,4);
\rec{2}{4}\rec{4}{4}
\rec{1}{3}\rec{4}{3}
\rec{1}{2}\rec{3}{2}
\rec{2}{1}\rec{3}{1}
draw[fill=Dark] (1,1) rectangle ++(-1,-1);
\end{scope}
\begin{scope}[xshift=10cm,yshift=2.4cm]
  \draw (0,0) grid (4,4);
\rec{1}{4}\rec{3}{4}
\rec{1}{3}\rec{2}{3}
\rec{2}{2}\rec{4}{2}
\rec{3}{1}\rec{4}{1}
\end{scope}
\begin{scope}[xshift=15cm,yshift=3.6cm]
  \draw (0,0) grid (4,4);
\rec{1}{4}\rec{2}{4}
\rec{2}{3}\rec{3}{3}
\rec{3}{2}\rec{4}{2}
\rec{1}{1}\rec{4}{1}
\end{scope}
\draw[color=Dark] (0,4)-- ++(15,3.6);
\draw[color=Dark] (4,0)-- ++(15,3.6);
\draw[color=Dark] (4,4)-- ++(15,3.6);
\end{scope}
\end{tikzpicture}
\]
The reader is encouraged to attempt to fill in the shaded (or unshaded) cells with the symbols 0 and 1 in such a way that no symbol occurs twice in any row, column, or file. Each such attempt will result in a contradiction. Cruse noted in \cite[p. 346]{Cru74} that there was no obvious way to generalize what we have called Cruse's Theorem to higher dimensions. See \cite{MW08} for examples of partial cubes that are not completable. On the other hand, Cruse \cite{Cru74b} has shown that each partial latin hypercube of order $n$ can be embedded inside a hypercube of each order $\ge 16n^4$.

\section{Proof of Theorem~\ref{frequency}} \label{Frequency Proof}
\begin{proof}
To see the necessity of these conditions we suppose that $R$ can be completed to a latin frequency square square $S$ of type $F(n,\lambda_1,\ldots,\lambda_t)$. $S$ can be transformed to a latin square of order $n$ in the following fashion.  If we delete the symbol $i$ from $S$ then there are precisely $\mu_i$ empty cells in each row and column. By the Gap Filling Lemma we can label these cells with symbols $i_1,\ldots,i_{\lambda_i}$ such that each row and column contains exactly one symbol $i_j$ for $1\le j\le \lambda_i$. Continuing in this fashion we convert $S$ to a latin square of order $n$. 

Now consider the locations of the symbols 
\[
1_1,1_2,\ldots,1_{\mu_1},2_1,2_2,\ldots,2_{\mu_2},\ldots,k_1,k_2,\ldots,k_{\mu_k}
\]
in the $r\times s$ square. By Cruse's Theorem we know that aach row has at least $r+t-n$ entries, and each column has at least $s+t-n$ entries. Each of the $\mu_i$ symbols $i_1,\ldots,i_{\mu_i}$ must occur at least $r+s-n$ times, so the symbol $i$ occurs $\mu_i(r+s-n)$ times. Also the number of entries does not exceed
\[
\frac{rst+(n-r)(n-s)(n-t)}{n}.
\]
This shows the necessity of the conditions \eqref{F-rows}--\eqref{F-entries}.

The proof of the sufficiency of conditions \eqref{F-rows}--\eqref{F-entries} makes use of the Shuffle Lemma and the Gap Filling Lemma. We can visualize the same picture now as we did for the proof of Cruse's Theorem.
\[
\begin{tikzpicture}[scale=.3]
\draw[style=dashed] (5,4) rectangle (8,0);
\draw(0,0) rectangle (8,8);
\draw(0,8) rectangle (5,4);
\draw [decorate,decoration={brace,amplitude=5pt},yshift=5pt]
   (0.1,8)  -- (4.9,8) 
   node [black,midway,above=3pt] {\footnotesize $s$};
\draw [decorate,decoration={brace,amplitude=5pt},yshift=5pt]
   (5.1,8)  -- (7.9,8) 
   node [black,midway,above=3pt] {\footnotesize $n-s$};
\draw [decorate,decoration={brace,amplitude=5pt},xshift=-5pt]
   (0,4.1)  -- (0,7.9) 
   node [black,midway,left=4pt] {\footnotesize $r$};
\draw [decorate,decoration={brace,amplitude=5pt},xshift=-5pt]
   (0,.1)  -- (0,3.9) 
   node [black,midway,left=4pt] {\footnotesize $n-r$};
\draw (2.5,6) node{$P$};
\draw (6.5,6) node{$A$};
\draw (2.5,2) node{$B$};
\end{tikzpicture}
\]
To apply the Shuffle Lemma we place the symbol $i$ in each row of the rectangle $A$ sufficiently many times so that $i$ occurs precisely $\mu_i$ times in the $r\times n$ rectangle $P\cup A$. Now inside $A$, we relabel the symbols 
\[
1,2,\ldots,k\quad\text{as}\quad1_1,1_2,\ldots,1_{\mu_1},2_1,2_2,\ldots,2_{\mu_2},\ldots,k_1,k_2,\ldots,k_{\mu_k}
\]
in the following fashion. From left to right, top to bottom, we relabel the $j$-th occurence of $i$ as $i_{\hat{j}}$ where $\hat{j}$ is the reduction of $j\mod \mu_i$. 

We now verify the hypotheses of the Shuffle Lemma with $a=r$, $b=n-s$, and $c=r+t-n$. Condition \eqref{F-symbols} guarantees that no symbol occurs more than $b$ times, and condition \eqref{F-entries} ensures that $A$ contains at least $(n-s)(r+t-n)$ symbols. Condition \eqref{F-rows} makes it certain that each row of $A$ has at most $n-s$ symbols.

Apply the Shuffle Lemma so that each column of $A$ contains at most $n-t$ empty cells but no symbol more than once. From here we can once again relabel $i_j$ with the symbol $i$. Now the $r\times n$ rectangle formed by the $P$ and $A$ has precisely $(n-t)$ empty cells in each row and at most $(n-t)$ empty cells in each row.  By the Gap Filling Lemma, we can fill these empty cells with symbols $z_1,\ldots,z_{n-t}$ so that no $z_i$ occurs more than once in any row or column. Finally we relabel the symbols $z_1,\ldots,z_{n-t}$ as the symbols $1,\ldots, k$ so that symbol $i$ occurs precisely $\lambda_i$ times in each row. This gives us an $r\times n$ F-rectangle of type $F(n;\lambda_1,\ldots,\lambda_k)$. This F-rectangle can be completed to an $n\times n$ F-rectangle of type $F(n;\lambda_1,\ldots,\lambda_k)$ by Hall's theorem. This completes the proof.
\end{proof}

\section{Proof of Theorem~\ref{saturated}}\label{Saturated Proof}

\begin{proof} 
The proof is along the same lines as the proof we provided for Cruse's Theorem. Adjoin a $R\times (S-s)$ rectangle, $A$, to the right side of $P$. The value $g(A_j)$ will correspond to the number of original symbols in row $i$ of $A$, and $f(i)$ will correspond to the number of times the symbol $i$ occurs in $A$. 
\[
\begin{tikzpicture}[scale=.3]
\draw(0,4) rectangle (8,8) rectangle (5,4);
\draw [decorate,decoration={brace,amplitude=3pt},yshift=5pt]
   (0.1,8)  -- (4.9,8) 
   node [black,midway,above=2pt] {\footnotesize $s$};
\draw [decorate,decoration={brace,amplitude=3pt},yshift=5pt]
   (5.1,8)  -- (7.9,8) 
   node [black,midway,above=2pt] {\footnotesize $S-s$};
\draw [decorate,decoration={brace,amplitude=2pt},xshift=-5pt]
   (0,4.1)  -- (0,7.9) 
   node [black,midway,left=2pt] {\footnotesize $R$};
\draw (2.5,6) node{$P$};
\draw (6.5,6) node{$A$};
\end{tikzpicture}
\]

To see the necessity of these conditions, we consider an $R\times s$ subrectangle $P$ of a saturated partial latin rectangle $Z$ of type $(R,S,T)$. Since $Z$ is saturated, and $R,S\le T$, every single cell of $Z$ must be filled. The symbols $1,\ldots,t$ will be called the original symbols, and the symbols $t+1,\ldots,T$ will be called the new symbols.

To see the necessity of \eqref{Sat-rows} and \eqref{Sat-columns} we note that at most $T-t$ new symbols can fill the empty cells of any row or column, so there must be at least $s-(T-t)$ entries in each row of $P$ and at least $R-(T-t)$ entries in each column of $P$.

For \eqref{Sat-symbols} we observe that $A$ has $S-s$ columns, so no original symbol can occur more than $S-s$ times in $A$ (condition \eqref{Sat-a}).  At most $S-s$ original symbols can occur in each row (condition \eqref{Sat-b}). Also the region $A$ must contain at least $(S-s)(R+t-T)$ original symbols (condition \eqref{Sat-d}). 

To see that condition \eqref{Sat-c} holds we let $P(i)$ be the number of original symbols in  $A$. Each row must have $S$ symbols, at most $(T-t)$ of which are new symbols. Let $P(i)$ be the number of original symbols in row $i$. Then number of original symbols in row $i$ will be $P(i)+g(A_i)$. The inequality $P(i)+g(A_i)\ge S-(T-t)$ implies
\[
g(A_i)\ge S-(T-t)-P(i)=S-(T-t)-(t-|A_i|)=S-T+|A_i|.
\]
Thus conditions \eqref{Sat-rows}--\eqref{Sat-symbols} are necessary.

Now we assume that conditions \eqref{Sat-rows}--\eqref{Sat-symbols} hold  for some rectangle $P$ and show that $P$ can be extended to a saturated $(R,S,T)$-partial latin rectangle. Consider the $r\times(S-s)$ rectangle $A$ that has its rows given by the set of $f,g$ representatives of $\mathcal{A}$. We shall invoke the Shuffle Lemma with $a=r$, $b=S-s$, and $c=R+t-T$. 

We first show that the Shuffle Lemma can be applied. Condition \eqref{Sat-a} guarantees that no symbol occurs more than $b$ times. Condition \eqref{Sat-b} ensures that the symbols fit inside an $a\times b$ array. Condition \eqref{Sat-d} ensures that at least $bc$ entries are in $A$.

We use the Shuffle Lemma to make $A$ a partial latin rectangle such that each column has at most $T-t$ gaps. Conditions \eqref{Sat-rows}, \eqref{Sat-columns}, and \eqref{Sat-c} then ensure that the $R\times S$ rectangle $P\cup A$ also has at most $T-t$ empty cells in any row or column. Now the Gap Filling Lemma ensures that the remaining cells can be filled with the $T-t$ new symbols.
\end{proof}

\section{Quasiembeddings of Latin Squares}\label{k-plex}
\subsection{Cruse's Theorem for Partial Latin Squares}

Since latin squares are more often encountered than latin rectangles, it is worth noting a formulation of the theorem in the special case of extending a partial latin square of order  $n$ to a latin square of order $n+k$.

\begin{corollary}\label{Cruse Corollary}
A partial latin square, $P$, of order $n$ can be extended to a latin square of order $n+k$ if and only $P$ can be completed (using the original $n$ symbols) to a partial latin square, $P'$, of order $k$ such that following two conditions hold.
\begin{enumerate}
\item Each row, column, and symbol is represented at least $n-k$ times in $P'$.
\item $P'$ has at least $k(n-k)$ empty cells.
\end{enumerate}
\end{corollary}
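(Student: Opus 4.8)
The plan is to read the corollary off directly from Cruse's Theorem, applied with $r=s=t=n$ and with the target order $N=n+k$ in place of $n$. First note that a partial latin square of order $n$ is precisely a partial latin rectangle with $r=s=n$ on the symbol set $\{1,\ldots,n\}$, so $t=n$; if $P$ happens to use fewer than $n$ distinct symbols we may still invoke Cruse's Theorem with the full alphabet $\{1,\ldots,n\}$, since every latin square of order $n+k$ contains all $n$ original symbols, and in the necessity direction one simply takes $P'$ to be the $n\times n$ block of the completion with the new symbols erased. (The ``order $k$'' in the statement should read ``order $n$''.) With this identification, Cruse's Theorem says that $P$ extends to a latin square of order $n+k$ if and only if $P$ can be completed, using the original $n$ symbols, to a partial latin square $P'$ of order $n$ satisfying \eqref{rows}--\eqref{entries} with $n$ replaced by $n+k$, and it remains to rewrite those four conditions.

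Substituting $r=s=t=n$ and $N=n+k$ into \eqref{rows} and \eqref{columns} gives the bound $s+t-N=r+t-N=n-k$ on the number of entries in each row and each column of $P'$, and into \eqref{symbols} gives $r+s-N=n-k$ for the number of occurrences of each symbol. These three conditions therefore collapse into the single requirement that each row, column, and symbol of $P'$ is represented at least $n-k$ times, which is condition (1) of the corollary.

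For \eqref{entries} the same substitution yields the upper bound
\[
\frac{rst+(N-r)(N-s)(N-t)}{N}=\frac{n^{3}+k^{3}}{n+k}=n^{2}-nk+k^{2}
\]
on the number of filled cells of $P'$, the last equality being the factorization $n^{3}+k^{3}=(n+k)(n^{2}-nk+k^{2})$. Since $P'$ is an $n\times n$ array it has $n^{2}$ cells, so this bound is equivalent to requiring that $P'$ have at least $n^{2}-(n^{2}-nk+k^{2})=k(n-k)$ empty cells, which is condition (2). Hence conditions (1) and (2) are exactly the translations of \eqref{rows}--\eqref{entries} under $n\mapsto n+k$, and the corollary follows. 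There is no genuine obstacle here: the only substantive remark is that enlarging the alphabet to all $n$ symbols does not affect the hypothesis of Cruse's Theorem, and the only computation is the elementary cubic factorization.
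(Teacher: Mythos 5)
Your proof is correct and is exactly the derivation the paper intends: the corollary is stated without an explicit proof as the specialization of Cruse's Theorem to $r=s=t=n$ with target order $n+k$, and your substitution together with the factorization $n^{3}+k^{3}=(n+k)(n^{2}-nk+k^{2})$ carries it out. Your remark that ``order $k$'' in the statement should read ``order $n$'' is also correct.
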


\begin{example}
Consider the following partial latin squares of order $5$.
\[
\begin{tikzpicture}[scale=.4]
\begin{scope}
\draw[fill=white] (0,0) rectangle (5,5);
\draw (0,0) grid (5,5);
\matrix
{
&1&2&3&4\\
1&&3&4&5\\
2&3&&5&1\\
3&4&5&&2\\
4&5&1&2&\\
};
\draw (2.5,0) node[below] {$A$};
\end{scope}
\begin{scope}[xshift=7cm]
\draw[fill=white] (0,0) rectangle (5,5);
\draw (0,0) grid (5,5);
\matrix
{
4&1&2&3&\\
1&2&3&4&\\
2&3&4&1&\\
3&4&1&2&\\
{}&&&&\\
};
\draw (2.5,0) node[below] {$B$}; 
\end{scope}
\begin{scope}[xshift=14cm]
\draw[fill=white] (0,0) rectangle (5,5);
\draw (0,0) grid (5,5);
\matrix
{
5&&&1&2\\
&3&4&&1\\
&4&3&5&\\
1&&5&2&4\\
2&1&&3&5\\
};
\draw (2.5,0) node[below] {$C$};
\end{scope}
\end{tikzpicture}
\]

By the theorem, the square $A$ can be embedded in latin squares of order $n$ for $n =5, 6$ or $n\ge9$.

The square $B$ can be embedded in a latin square of order $n$ for each $n\ge8$ (a result that we could have obtained from Evan's theorem).

The square $C$ can be embedded in a latin square of order $n$ for each $n\ge7$.
\end{example}

\subsection{Quasi-embeddings and Brualdi's Conjecture}
A \emph{transversal} of a latin square is a subset of $n$ cells such that each of the $n$ rows, columns, and symbols is represented in one of the cells. A \emph{partial transversal of order $m$} is a subset of $m$ cells such that each of the $n$ rows, columns, and symbols is represented in at most one of the cells.

A \emph{$k$-plex} in a latin square of order $n$ is a subset of $kn$ cells such that each of $n$ symbols occurs exactly $k$ times \cite{Wan02}. A $k$-plex is a generalization of the notion of a transversal because each transversal is a $1$-plex. A \emph{partial $k$-plex of order $m$} in a latin square is a subset of $m$ cells such that each row and each column contain at most $k$ entries, and each symbol occurs at most $k$ times.

Each partial transversal of size $m$ is a partial $1$-plex of order $m$. Deleting any $k(n-m)$ entries from a $k$-plex results in a partial $k$-plex of size $m$. 

The intersection number of two latin squares $L$ and $K$ is the number of cells where the $(i,j)$ entry of $L$ equals the $(i,j)$ entry of $K$. The references \cite{DM09} and \cite{How10} investigated the spectrum $I(n,n+k)$ of intersection numbers for squares of orders $n$ and $n+k$. When $k\ge n$, $I(n,n+k)=[0,n^2]$ by Evan's Theorem.  When $k\le n$, the upper bound for members of $I(n,n+k)$ is $n^2-k(n-k)$. We say that a latin square $L$ of order $n$ is \emph{quasi-embedded} inside a latin square $K$ of order $n+k$ if all but $k(n-k)$ cells of $L$ occur in the upper left corner of $K$.

\begin{theorem}
A latin square, $L$, of order $n$ can be quasi-embedded inside a square of order $n+k$ if and only if $L$ contains a partial $k$-plex of order $k(n-k)$.
\end{theorem}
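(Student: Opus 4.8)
The plan is to treat the two implications separately: sufficiency via Corollary~\ref{Cruse Corollary}, and necessity via a counting argument on the ``overflow'' columns of the larger square together with conjugation. For the ``if'' direction, suppose $L$ contains a partial $k$-plex $S$ of order $k(n-k)$, and delete from $L$ the $k(n-k)$ entries occupying the cells of $S$ to obtain a partial latin square $P$ of order $n$. Because $S$ meets each row, each column, and each symbol class at most $k$ times, $P$ itself already satisfies the two hypotheses of Corollary~\ref{Cruse Corollary}: every row, column, and symbol of $P$ is still represented at least $n-k$ times, and $P$ has exactly $k(n-k)$ empty cells. Hence $P$ extends to a latin square $K$ of order $n+k$. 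It remains only to note that this extension can be taken so that the cells of $S$ receive the new symbols $n+1,\dots,n+k$: in the proof of Cruse's Theorem given in Section~\ref{Proof of Cruse's Theorem}, step (i) only writes original symbols into the adjoined columns, so the cells of $S$ are still empty when step (ii) fills every remaining empty cell with new symbols. Then $K$ agrees with $L$ on exactly the $n^2-k(n-k)$ cells outside $S$, so $L$ is quasi-embedded in $K$.

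For the ``only if'' direction, suppose $L$ is quasi-embedded in a latin square $K$ of order $n+k$, and let $T\subseteq\{1,\dots,n\}^2$ be the set of cells on which $K$ and $L$ disagree, so $|T|=k(n-k)$. The crux is the $n\times k$ block $X$ consisting of rows $1,\dots,n$ and columns $n+1,\dots,n+k$ of $K$. Each of the $k$ columns of $X$ holds $n$ distinct symbols, at most $k$ of which can be new, so each holds at least $n-k$ original symbols; hence $X$ contains at least $k(n-k)$ cells whose symbol lies in $\{1,\dots,n\}$. I would then send such a cell $(i,n+m)$, say with $K(i,n+m)=j_0\le n$, to the cell $(i,c)$ of row $i$ where $L(i,c)=j_0$. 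Since symbol $j_0$ cannot occupy both column $c$ and column $n+m$ in row $i$ of $K$, we get $K(i,c)\ne L(i,c)$, i.e.\ $(i,c)\in T$; and this map is injective because it fixes the row and $L$ is Latin. Therefore $k(n-k)\le(\text{number of original-symbol cells of }X)\le|T|=k(n-k)$, so the map is a bijection onto $T$ and the at most $k$ cells in a given row of $X$ account for at most $k$ cells of $T$ in that row; consequently every row of $L$ contains at most $k$ cells of $T$.

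Since quasi-embeddability is preserved under conjugation, I would finish by applying the block argument above to the row--column transpose of $(L,K)$ to get that every column of $L$ contains at most $k$ cells of $T$, and to the column--symbol conjugate of $(L,K)$ to get that each symbol occurs at most $k$ times among the cells of $T$. Combined with $|T|=k(n-k)$, these three bounds say precisely that $T$ is a partial $k$-plex of $L$ of order $k(n-k)$. I expect the necessity direction to be the main obstacle --- in particular the observation that the new symbols forced into the block $X$ inject into the disagreement set $T$, which makes the inequality $|T|\ge k(n-k)$ an equality and thereby forces all three ``at most $k$'' bounds to be tight; the sufficiency direction is a direct application of Corollary~\ref{Cruse Corollary}.
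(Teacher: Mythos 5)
Your proof is correct. The sufficiency half is exactly the paper's route: complement the partial $k$-plex $S$ inside $L$ to obtain a partial latin square $P$ with $n^2-k(n-k)$ cells meeting the two hypotheses of Corollary~\ref{Cruse Corollary}, and complete; the closing paragraph about forcing the cells of $S$ to receive the new symbols is unnecessary, since quasi-embedding only asks that $K$ agree with $L$ on the $n^2-k(n-k)$ cells of $P$, which any completion of $P$ achieves. Where you genuinely diverge is the necessity half. The paper disposes of it in one sentence by declaring both statements equivalent to the existence of a partial latin square $P\subseteq L$ with $n^2-k(n-k)$ cells in which every row, column and symbol is represented at least $n-k$ times --- that is, it silently reuses the necessity direction of Cruse's Theorem, and in doing so glosses over the point that the original-symbol cells in the corner of $K$ need not a priori coincide with the cells agreeing with $L$. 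Your block-counting argument handles exactly this: the row-preserving injection from the original-symbol cells of the $n\times k$ block $X$ into the disagreement set $T$, squeezed between the lower bound $k(n-k)$ coming from the column count and $|T|=k(n-k)$, forces that injection to be a bijection and hence yields the bound of $k$ cells of $T$ per row directly; conjugation then supplies the column and symbol bounds. The result is longer than the paper's one-line equivalence but self-contained (it does not invoke the necessity direction of Cruse's Theorem) and it makes explicit the tightness that the paper's argument takes for granted.
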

\begin{proof}
Both statements are equivalent to the existence of a partial latin square $P$ inside $L$ such that $P$ contains $n^2-k(n-k)$ cells with each row, column, and symbol represented at least $n-k$ times in $P$. The partial latin square $L\setminus P$ is a partial $k$-plex of order $k(n-k)$ and Corollary~\ref{Cruse Corollary} guarantees $P$ can be completed to a square of order $n+k$.
\end{proof}

\begin{conjecture}
Every latin square of order $n$ has a partial $k$-plex of order $k(n-k)$ for each $k\le n$.
\end{conjecture}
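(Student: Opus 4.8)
The plan is to route the conjecture through the quasi-embedding theorem just proved: it suffices to show that every latin square $L$ of order $n$ quasi-embeds inside some latin square of order $n+k$, which by Corollary~\ref{Cruse Corollary} amounts to exhibiting $k(n-k)$ cells of $L$ that meet each row, each column, and each symbol at most $k$ times. First I would dispose of the range $k\ge\lceil n/2\rceil$ outright. Choose any $n-k$ rows of $L$ and, from each of them, delete $n-k$ of its $n$ entries, leaving exactly $k$ entries in that row. The surviving $k(n-k)$ cells meet each row at most $k$ times by construction, and meet each column and each symbol at most $n-k$ times because only $n-k$ rows are involved; since $n-k\le k$ in this range, this is the required partial $k$-plex. (At the extreme $k=n$ the set is empty and the resulting quasi-embedding into order $2n$ is just Evans's Theorem applied to $L$.)

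For $k<n/2$ this construction breaks down---the $n-k$ chosen rows may contribute $n-k>k$ entries to a single column---and the $k(n-k)$ cells must be distributed more carefully. The natural plan is to assemble the partial $k$-plex as a union of $k$ pairwise cell-disjoint partial transversals, each of size $n-k$: such a union meets every row, column, and symbol at most once per transversal, hence at most $k$ times, and contains exactly $k(n-k)$ cells. The problem then reduces to showing that every latin square of order $n$ contains $k$ cell-disjoint partial transversals of size $n-k$, which one would try to produce greedily---having fixed $j<k$ of them, delete their cells and seek a further partial transversal of size $n-k$ in the punctured array, using the slack (the target $n-k$ falls short of the full size $n$ by exactly $k$) to absorb the at most $j$ holes created per line. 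A parallel approach is fractional: the uniform weighting that puts $k/n$ on every cell is a fractional $k$-plex of total mass $kn$, exceeding the target $k(n-k)$ by $k^2$, and one would hope to round it down to an integral partial $k$-plex of the prescribed size, the surplus $k^2$ providing exactly the room needed to clear the fractional parts in the $3n$ line constraints.

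The main obstacle is structural and cannot be sidestepped: specializing to $k=1$ reduces the conjecture to the statement that every latin square of order $n$ has a partial transversal of size $n-1$, i.e.\ to Brualdi's Conjecture, which is open. Consequently no complete proof is available with current methods, and the realistic programme is (i) the elementary range $k\ge\lceil n/2\rceil$ settled above, (ii) a reduction of the remaining range $1<k<n/2$ to the case $k=1$ via the cell-disjoint-transversals scheme, and (iii) asymptotic statements obtained by feeding known near-transversal bounds---partial transversals of size $n-O(\log^2 n)$---through the greedy or fractional rounding argument to obtain partial $k$-plexes of size $k(n-k)-o(n)$ for a broad range of $k$. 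Where I expect the genuine difficulty to lie is in step (ii): controlling the interaction of the holes left by previously chosen transversals so that a partial transversal of size $n-k$ still survives in the punctured square, a problem of essentially the same character as, and no easier than, Brualdi's Conjecture itself.
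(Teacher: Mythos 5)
This statement is left as an open conjecture in the paper; there is no proof of it to compare yours against. The paper's only remarks are that the case $k=1$ is precisely Brualdi's Conjecture (open, and apparently the hardest case) and that the case $k=n-1$ is trivial because any $n-1$ cells form a partial $(n-1)$-plex of order $n-1$. Your assessment is therefore accurate: you correctly decline to claim a complete proof, and you correctly locate the obstruction at $k=1$, where the statement collapses to Brualdi's Conjecture itself, so no argument that specializes sensibly to that case can currently succeed.

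What you add beyond the paper is worth noting. Your construction for $k\ge\lceil n/2\rceil$ is correct and strictly extends the paper's trivial observation at $k=n-1$: taking any $n-k$ rows and retaining $k$ cells in each gives $k(n-k)$ cells meeting each row at most $k$ times by fiat, and each column and symbol at most $n-k\le k$ times since a latin square has at most one occurrence of any column or symbol per row. Your proposed reduction for $k<n/2$ via $k$ pairwise cell-disjoint partial transversals of size $n-k$ is a sound sufficient condition (such a union does meet every line and symbol at most $k$ times and has the right cardinality), but as you say, establishing the existence of those disjoint partial transversals is not obviously easier than Brualdi's Conjecture, and the greedy/fractional schemes you sketch are not carried out. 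In short: no gap in the sense of an erroneous claim, an honest and correct partial result for $n-k\le k$, and an accurate identification of why the general case remains out of reach.
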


When $k=1$, this conjecture is precisely Brualdi's Conjecture. However, the $k=1$ case seems to be the most difficult in general. At the other extreme, a partial $(n-1)$-plex of order $n-1$ can be found by selecting any $n-1$ cells, so the conjecture is true when $k=n-1$.

\providecommand{\bysame}{\leavevmode\hbox to3em{\hrulefill}\thinspace}

\end{document}